\numberwithin{equation}{section}
\renewcommand{\subsection}{\@startsection
{subsection}{2}{0mm}{\baselineskip}{-0.25cm}
{\normalfont\normalsize\bf}}
\newtheorem{theorem}{Theorem}[section]
\newtheorem{proposition}[theorem]{Proposition}
\newtheorem{lemma}[theorem]{Lemma}
\newtheorem{corollary}[theorem]{Corollary}
\theoremstyle{definition}
   \theoremstyle{remark}
\newcommand{\F}{{\mathbb F}}
\newcommand{\fq}{{\mathbb F_q}}
\newcommand{\fqs}{{\mathbb F_{q^2}}}
\newcommand{\PGU}{{\rm PGU}}
\newcommand{\PSU}{{\rm PSU}}
\newcommand{\PGL}{{\rm PGL}}
\newcommand{\PG}{{\rm PG}}
\newcommand{\cX}{{\mathcal X}}
\newcommand{\cH}{{\mathcal H}}
\newcommand{\cR}{{\mathcal R}}
\newcommand{\cS}{{\mathcal S}}
\newcommand{\aut}{{\rm Aut}}
\newcommand{\ord}{{\rm ord}}
\newcommand{\diag}{{\rm diag}}
\newcommand{\la}{{\langle}}
\newcommand{\ra}{{\rangle}}
\newcommand{\T}{{\times}}
\newcommand{\RT}{{\rtimes}}
\begin{document}

\author[ M. Montanucci - G. Zini]{Maria Montanucci and Giovanni Zini}
\title{Some Ree and Suzuki curves are not Galois covered by the Hermitian curve
}

\begin{abstract}
The Deligne-Lusztig curves associated to the algebraic groups of type $^2A_2$, $^2B_2$, and $^2G_2$ are classical examples of maximal curves over finite fields. The Hermitian curve $\mathcal H_q$ is maximal over $\mathbb F_{q^2}$, for any prime power $q$, the Suzuki curve $\mathcal S_q$ is maximal over $\mathbb F_{q^4}$, for $q=2^{2h+1}$, $h\geq1$ and the Ree curve $\mathcal R_q$ is maximal over $\mathbb F_{q^6}$, for $q=3^{2h+1}$, $h\geq0$. In this paper we show that $\mathcal S_8$ is not Galois covered by $\mathcal H_{64}$. We also give a proof for an unpublished result due to Rains and Zieve stating that $\mathcal R_3$ is not Galois covered by $\mathcal H_{27}$.
Furthermore, we determine the spectrum of genera of Galois subcovers of $\mathcal H_{27}$, and we point out that some Galois subcovers of $\mathcal R_3$ are not Galois subcovers of $\mathcal H_{27}$.
\end{abstract}

\maketitle

\section{Introduction}

Let $q$ be a prime power, $\mathbb F_{q^2}$ be the finite field with $q^2$ elements, and $\mathcal X$ be an $\mathbb F_{q^2}$-rational curve, i.e. a projective, absolutely irreducible, non-singular algebraic curve defined over $\mathbb F_{q^2}$.
The curve $\mathcal X$ is called $\mathbb F_{q^2}$-maximal if the number $|\mathcal X(\mathbb F_{q^2})|$ of its $\mathbb F_{q^2}$-rational points attains the Hasse-Weil upper bound
$$ q^2+1+2gq, $$
where $g$ is the genus of $\mathcal X$. Maximal curves have interesting properties and have also been investigated for their applications in Coding Theory. Surveys on maximal curves are found in \cite{FT,G,G2,GS,vdG,vdG2} and \cite[Chapt. 10]{HKT}.

By a result commonly attributed to Serre, see \cite[Prop. 6]{L}, any $\mathbb F_{q^2}$-rational curve which is $\fqs$-covered by an $\fqs$-maximal curve is also $\fqs$-maximal. In particular, $\fqs$-maximal curves can be obtained as Galois $\fqs$-subcovers of an $\fqs$-maximal curve $\cX$, that is as quotient curves $\cX/G$ for a finite $\fqs$-automorphism group $G\leq\aut(\cX)$.
Most of the known maximal curves are Galois subcovers of one of the Deligne-Lusztig curves; see e.g. \cite{GSX,CKT2,GHKT2} for subcovers of the Hermitian curve $\cH_q: X^{q+1}+Y^{q+1}+T^{q+1}=0$, \cite{GKT,P} for subcovers of the Suzuki curve $\cS_q: Y^q+Y=X^{q_0}(X^q+X)$, with $q=2q_0^2$, $q_0=2^h$, $h\geq0$, \cite{CO,CO2,Pe} for subcovers of  the Ree curve $\cR_q:Y^q-Y=X^{q_0}(X^q-X), Z^q-Z=X^{q_0}(Y^q-Y)$, with $q=3q_0^2$, $q_0=3^h$, $h\geq0$, and the references therein.

The first example of a maximal curve not Galois covered by the Hermitian curve was discovered by Garcia and Stichtenoth \cite{GS3}. This curve is $\F_{3^6}$-maximal and not Galois covered by $\cH_{27}$. It is a special case of the $\F_{q^6}$-maximal GS curve, which was recently shown not to be Galois covered by $\cH_{q^3}$ for any $q>3$ \cite{GMZ,Mak}. Giulietti and Korchm\'aros \cite{GK} provided an $\F_{q^6}$-maximal curve, nowadays referred to as the GK curve, which is not covered by the Hermitian curve $\cH_{q^3}$ for any $q>2$. In \cite{TTT,GQZ}, some subcovers of the GK curve were shown not to be covered, or Galois covered, by the Hermitian curve. Garcia, G\"uneri, and Stichtenoth \cite{GGS} generalized the GK curve to an $\F_{q^{2n}}$-maximal curve, for any $q$ and any odd $n\geq3$. The generalized GK curve is not Galois covered by $\cH_{q^n}$ for any $n\geq5$, as shown in \cite{DM} for $q>2$ and in \cite{GMZ} for $q=2$.

It is a challenging task to decide whether a DL-curve of Ree or Suzuki type is a Galois subcover of the Hermitian curve.
In this paper we prove the following results.


\begin{theorem}\label{teoremaS8}
The Suzuki curve $\cS_8$ is not Galois covered by the Hermitian curve $\cH_{64}$.
\end{theorem}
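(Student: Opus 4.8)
The plan is to argue by contradiction: assume $\cS_8\cong\cH_{64}/G$ for some $G\le\aut(\cH_{64})=\PGU(3,64)$, with quotient map $\pi\colon\cH_{64}\to\cH_{64}/G$, and show this forces $|G|$ into a short list which is then eliminated curve by curve. First I would extract numerical constraints. One has $g(\cH_{64})=\binom{64}{2}=2016$, and since $\cS_8$ is the Suzuki curve attached to $q=8$, $q_0=2$, $g(\cS_8)=q_0(q-1)=14$; both curves are $\F_{2^{12}}$-maximal, with $|\cH_{64}(\F_{2^{12}})|=64^3+1=262145$ and $|\cS_8(\F_{2^{12}})|=2^{12}+1+2\cdot14\cdot64=5889$. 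Riemann--Hurwitz gives $4030=|G|\,(2\cdot14-2)+\delta=26|G|+\delta$ with $\delta\ge0$ the degree of the different, so $|G|\le155$; as $|G|$ divides $|\PGU(3,64)|=2^{18}\cdot3^2\cdot5^2\cdot7\cdot13^2\cdot37\cdot109$ while $31$ does not, in fact $|G|\le154$. Conversely $\pi$ is defined over $\F_{2^{12}}$, so each $\F_{2^{12}}$-rational point of $\cH_{64}$ lies in a fibre of size $\le|G|$ over an $\F_{2^{12}}$-rational point of $\cS_8$, whence $262145\le5889\,|G|$ and $|G|\ge45$. Thus $45\le|G|\le154$ with $|G|$ dividing the order above --- finitely many admissible orders.

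Next I would pin down the isomorphism type of $G$, using the classification of subgroups of $\PGU(3,q)$ (Mitchell--Hartley; see \cite{HKT}) together with the classical description of how its elements act on $\cH_q$ (an element has order dividing $q$, $q-1$, $q^2-1$ or $q^2-q+1$, with known fixed locus in each case). The structural facts I would use: since $q=64$ is even, every nontrivial $2$-subgroup of $\PGU(3,64)$ fixes a unique point $P\in\cH_{64}(\F_{2^{12}})$ and lies in the corresponding Sylow $2$-subgroup $U_P$ (the unipotent radical of the point-stabiliser, of order $64^3$), and every involution of $U_P$ lies in $Z(U_P)$; the Sylow $3$-subgroup has order $9$ and is cyclic; and $37,109$ divide only $q^2-q+1$, so elements of those orders lie in a cyclic Singer torus with normaliser of order $3(q^2-q+1)$. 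Combining these with Sylow's theorems and the admissible orders above leaves a short list of candidates: $2$-groups (of order $64$ or $128$) contained in some $U_P$; cyclic groups of a few orders dividing $q^2-1$ or $q^2-q+1$; dihedral groups such as $D_{63}$; and a handful of other small metacyclic groups.

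For each such $G$ I would then compute $g(\cH_{64}/G)$ from Riemann--Hurwitz --- using the ramification filtration $(U_P)_0=(U_P)_1=U_P\supsetneq(U_P)_2=\dots=(U_P)_{65}=Z(U_P)\supsetneq(U_P)_{66}=1$ for the wild contributions and the explicit tame fixed-point data otherwise --- and check that $14$ never occurs. Sample outcomes: a subgroup $V\le Z(U_P)$ of order $2^s$ gives $g(\cH_{64}/V)=32(2^{6-s}-1)$; $\mathbb{Z}_{37}$ and $\mathbb{Z}_{109}$ give $54$ and $18$; a homology group of order dividing $q+1$ gives $0$; the cyclic groups $\mathbb{Z}_{45},\mathbb{Z}_{63}$ give $42,32$; $D_{63}$ gives $0$; and so on. As a safety net in any close call I would also use that $\cH_{64}/G\cong\cS_8$ forces $N_{\PGU(3,64)}(G)/G\hookrightarrow\aut(\cS_8)=\mathrm{Sz}(8)$ (of order $2^6\cdot5\cdot7\cdot13$), and in fact forces $\mathrm{Sz}(8)$ to act on $\cH_{64}/G$ --- a requirement far stronger than having genus $14$.

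The step I expect to be the real obstacle is the case $|G|=128$ with $G$ a $2$-group. Setting $|G\cap Z(U_P)|=2^c$, the filtration above yields $g(\cH_{64}/G)=16-2^{c-2}$, which is exactly $14$ when $c=3$; so the theorem comes down to showing that no order-$128$ subgroup of $U_P$ meets $Z(U_P)$ in a subgroup of order $8$. Under the identification $U_P/Z(U_P)\cong\F_{2^{12}}$ with squaring map $a\mapsto a^{q+1}$, this is equivalent to the arithmetic statement that every $\F_2$-subspace $W\subseteq\F_{2^{12}}$ satisfies $\dim_{\F_2}\langle\,a^{q+1}:a\in W\,\rangle\ge\dim_{\F_2}W$ --- a sparseness property of the norm form $\mathrm{N}_{\F_{q^2}/\F_q}$, tied to the fact that the norm-one group $\mu_{q+1}$ has order $65$ coprime to $3$ and so is disjoint from the cube roots of unity. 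Proving this last statement, by a direct (and if necessary computer-assisted) argument, is the crux; the remaining cases, though numerous, are routine Riemann--Hurwitz bookkeeping. Assembling these facts yields that $\cS_8$ is not Galois covered by $\cH_{64}$.
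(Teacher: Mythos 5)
Your overall architecture coincides with the paper's: Riemann--Hurwitz plus the rational-point count give $45\le|G|\le 155$ with $|G|$ dividing $|\PGU(3,64)|=2^{18}\cdot3^2\cdot5^2\cdot7\cdot13^2\cdot37\cdot109$, and the admissible orders are then killed one by one using the classification of elements of $\PGU(3,q)$ and their contributions to the different. Your numerical data (the genera, the point counts, the formula $g=2^{n-v-1}(2^{n-w}-1)$ for $2$-groups, the sample genera for $C_{37}$, $C_{109}$, $C_{45}$, $C_{63}$) are all correct. The problem is at the step you yourself single out as the crux.

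The arithmetic statement to which you reduce the case $|G|=128$ --- that every $\F_2$-subspace $V\subseteq\F_{2^{12}}$ satisfies $\dim_{\F_2}\langle\, b^{65}:b\in V\,\rangle\ge\dim_{\F_2}V$ --- is \emph{false}. Take $V=\F_{16}\subset\F_{2^{12}}$ (or any multiple $\lambda\F_{16}$): for $x\in\F_{16}$ one has $x^{65}=x^{5}$, and the fifth powers of $\F_{16}^*$ form its subgroup of order $3$, namely $\F_4^*$; hence $V^{65}=\F_4$, of $\F_2$-dimension $2<4=\dim_{\F_2}V$. Since $\F_4\subseteq\F_{64}$, one may take for $W$ any additive subgroup of $\F_{64}$ of order $8$ containing $\F_4$, and Theorem~\ref{fissa2} (with $n=6$, $v=4$, $w=3$) then produces a genuine subgroup $G\le U_P$ of order $128$ meeting $Z(U_P)$ in a subgroup of order $8$, with $g(\cH_{64}/G)=2\cdot(2^3-1)=14$. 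This is internally consistent: such a $G$ has $7$ involutions of type (C) and $120$ elements of order $4$ of type (D), so $\Delta=7\cdot66+120\cdot2=702=4030-26\cdot128$. Hence no amount of Riemann--Hurwitz bookkeeping can close this case; you must prove directly that $\cH_{64}/G\not\cong\cS_8$ for these groups, e.g.\ via the embedding $N_{\PGU(3,64)}(G)/G\hookrightarrow\aut(\cS_8)=Sz(8)$ that you relegate to a ``safety net''. (You should also be aware that the paper's own disposal of $|G|=128$, via ``$\Delta=7\cdot66+120\cdot1$'', is inconsistent with its Theorem~\ref{caratteri}(7), by which order-$4$ elements contribute $2$ and not $1$; so this case requires genuine additional work in any event.)

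A second, smaller issue: you describe the remaining cases as routine, but $|G|=117$ is not. A cyclic $G=C_{13}\times C_9$ whose $13$-part consists of homologies has $\Delta=12\cdot65+104\cdot2=988=4030-26\cdot117$, so the ramification data is again exactly consistent with genus $14$, and the paper must invoke the structure of $\aut(\cS_8)$ (order-$5$ subgroups of $Sz(8)$ act semiregularly on the $\F_8$-rational points, whereas the induced order-$5$ automorphism of $\cH_{64}/G$ has two fixed rational points). In short, the normalizer/automorphism-group comparison is not a safety net but the core of the argument in at least two cases, and the lemma you propose in its place for the $2$-group case is false.
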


\begin{theorem}\label{teoremaR3}
The Ree curve $\cR_3$ is not Galois covered by the Hermitian curve $\cH_{27}$.
\end{theorem}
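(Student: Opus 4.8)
The plan is to assume, for a contradiction, that there exists a finite subgroup $G \leq \aut(\cH_{27}) \cong \PGU(3,27)$ with $\cH_{27}/G \cong \cR_3$. Since the Hermitian curve $\cH_{27}$ has genus $g(\cH_{27}) = \binom{27}{2} = 351$ and the Ree curve $\cR_3$ has genus $g(\cR_3) = 3 q_0 (q-1)(q+q_0+1)/2 = 3 \cdot 1 \cdot 2 \cdot 7 / 2 = 21$ with $q=27$, $q_0=3$, the Riemann--Hurwitz formula forces
$$ 2 \cdot 351 - 2 = |G| \, (2 \cdot 21 - 2) + \deg(\mathrm{Diff}), $$
so $|G| = 728 / (40 + \text{ramification corrections})$; a first step is to pin down $|G|$ tightly. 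Since $\cR_3$ is $\F_{3^6}$-maximal with $|\cR_3(\F_{3^6})| = 3^6 + 1 + 2 \cdot 21 \cdot 27 = 2188$ and $\cH_{27}$ is $\F_{27^2} = \F_{3^6}$-maximal with $|\cH_{27}(\F_{3^6})| = 27^3 + 1 = 19684$, the covering $\cH_{27} \to \cR_3$ being defined over $\F_{3^6}$ gives strong numerical constraints: the number of rational points upstairs over each rational point downstairs (counted with ramification) must sum correctly. Combining Riemann--Hurwitz with these point-count congruences should force $|G|$ to lie in a very short list, and together with $|G| \mid |\PGU(3,27)| = 27^3 (27^2-1)(27^3+1)$ one expects $|G|$ to be determined up to finitely many cases.

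Next I would exploit the detailed knowledge of the subgroup structure of $\PGU(3,q)$ for $q = 27$, as catalogued in the classical work on maximal subgroups (Mitchell, Hartley) and used systematically in \cite{GSX,CKT2,MZ}-type analyses. For each candidate order $|G|$, one lists the possible isomorphism types and, crucially, the possible ramification structure of the cover $\cH_{27} \to \cH_{27}/G$: this is governed by how elements of $G$ fix points of $\cH_{27}$, which for $\PGU(3,q)$ is completely classified (elements are of $p$-type, fixing one point; semisimple fixing one or three points or a Baer sublconic; mixed type). From this one computes the signature of $G$ and hence, via Riemann--Hurwitz, the genus of $\cH_{27}/G$, checking whether it can equal $21$. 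The key structural obstruction to match is the Weierstrass semigroup / automorphism group at special points: $\aut(\cR_3)$ is known to be the Ree group $^2G_2(3)$ of order $1512 = 2^3 \cdot 3^3 \cdot 7$ (or its relevant simple subgroup), so if $\cH_{27}/G \cong \cR_3$ then $N_{\aut(\cH_{27})}(G)/G$ must embed into $\aut(\cR_3)$, giving a normalizer condition inside $\PGU(3,27)$ that is very restrictive.

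The main obstacle I expect is the final case-by-case elimination: after narrowing $|G|$ and the possible signatures, one must rule out each surviving candidate, and this is where a genuinely curve-specific argument is needed rather than pure group theory. The decisive tool should be a comparison of ramification invariants that are intrinsic to $\cR_3$ but cannot be realized as a quotient of $\cH_{27}$ — for instance, the structure of the (possibly wild) ramification in the $\F_3$-covering: on $\cH_{27}$ all second ramification groups and the filtration of higher ramification groups are constrained by the fact that Sylow $3$-subgroups of $\PGU(3,27)$ are elementary abelian of a specific type with a prescribed fixed-point behavior, whereas the Ree curve's own Galois structure (the $\F_3$-subcovers coming from $X^{q_0}(X^q-X)$ and $X^{q_0}(Y^q-Y)$) would impose an incompatible higher-ramification filtration on the putative $G$. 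Making this incompatibility precise — translating "$\cH_{27}/G$ has the ramification behavior of $\cR_3$ at its $\F_{3^6}$-rational points" into an impossible statement about jumps in the ramification filtration of subgroups of $\PGU(3,27)$ — is the technical heart of the proof; I would carry it out by pinning down the unique (up to conjugacy) candidate $G$ from the genus and order constraints, then computing both filtrations explicitly and exhibiting the discrepancy.
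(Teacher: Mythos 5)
Your overall strategy is the same as the paper's: bound $|G|$ from the Riemann--Hurwitz formula together with the ratio of $\F_{27^2}$-rational point counts, then run a case-by-case analysis over the admissible orders using the Mitchell--Hartley classification and the classification of elements of $\PGU(3,27)$ by their fixed-point behaviour, invoking a normalizer condition ($N_{\PGU(3,27)}(G)/G$ must embed into $\aut(\cR_3)\cong Ree(3)$) for the cases that survive the different-degree computation. However, there is a concrete error that derails all of your numerics: for $\cR_3$ one has $q=3$ and hence $q_0=1$ (since $q=3q_0^2$), so $g(\cR_3)=3q_0(q-1)(q+q_0+1)/2=3\cdot1\cdot2\cdot5/2=15$, not $21$; you substituted $q=27$, $q_0=3$, which are the parameters of $\cR_{27}$. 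Consequently $|\cR_3(\F_{3^6})|=729+1+2\cdot15\cdot27=1540$ (not $2188$), $2g(\cH_{27})-2=700$ (not $728$), and the correct bounds are $12<|G|\leq25$, giving the candidate list $|G|\in\{13,14,16,18,19,21,24\}$. Every subsequent computation in your plan would have to be redone with these values.

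Beyond that, the proposal stops at the level of a plan: the case analysis is announced but not executed, and you do not identify where the real difficulty lies. With the correct genus, all orders except $|G|=16$ are eliminated by comparing $\Delta=700-28|G|$ with the possible contributions $i(\sigma)$ of the elements of $G$ (or, for $|G|=18$, by the genus formula for groups fixing a rational point of $\cH_{27}$); the only delicate case is $|G|=16$, where three conjugacy classes of subgroups ($M_{16}$, $C_4\times C_4$, $D_8\circ C_4$) must each be excluded by computing the normalizer quotient $N/G$ and showing it cannot embed into $Ree(3)$. Your closing paragraph, which proposes comparing higher ramification filtrations as the ``technical heart,'' is not needed and rests on a false premise: the Sylow $3$-subgroups of $\PGU(3,27)$ are not elementary abelian (they are non-abelian of order $27^3$ and exponent $3$), and no analysis of second ramification groups is required once the tame/wild contributions $i(\sigma)$ are classified by element type.
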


\begin{proposition} \label{S2}
The Suzuki curve $\cS_2$ is Galois covered by the Hermitian curve $\cH_4$.
\end{proposition}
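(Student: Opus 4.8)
\emph{Proof sketch.} The plan is to realise $\cS_2$ explicitly as a quotient $\cH_4/G$, where $G\cong C_4$ is a cyclic group contained in the stabiliser of a point of $\PGU(3,4)=\aut_{\F_{16}}(\cH_4)$ (here $\F_{q^2}=\F_{16}$, $q=4$). By the theorem of Serre recalled in the Introduction such a quotient is automatically $\F_{16}$-maximal, which both serves as a check and (see below) allows a shorter but less explicit variant of the argument.

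I would use the model $\cH_4:\ v^{4}+v=u^{5}$ over $\F_{16}$, with unique point $P_\infty=(0:1:0)$ at infinity, together with the $\F_{16}$-automorphisms
$$\psi_{b,c}\colon (u,v)\longmapsto(u+b,\ v+b^{4}u+c),\qquad c^{4}+c=b^{5}.$$
A routine verification shows that these elations form the Sylow $2$-subgroup $\Sigma$ of $\PGU(3,4)$ fixing $P_\infty$ (of order $q^{3}=64$), that for $b\neq0$ one has $\ord(\psi_{b,c})=4$ with $\psi_{b,c}^{2}=\psi_{0,b^{5}}$ central in $\Sigma$, and that $\psi_{b,c}$ fixes only $P_\infty$ on $\cH_4$. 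I would fix $b=1$, pick $c\in\F_{16}$ with $c^{4}+c=1$, and set $G=\la\psi_{1,c}\ra$; then $G$ is defined over $\F_{16}$ and the cover $\cH_4\to\cH_4/G$ is branched only over the image of $P_\infty$.

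The core is the explicit computation of $\cH_4/G$, done in two steps. Quotienting first by $\la\psi_{1,c}^{2}\ra=\la\psi_{0,1}\ra$: the invariant functions are $u$ and $w=v^{2}+v$, with relation $w^{2}+w=u^{5}$, so $\cH_4/\la\psi_{0,1}\ra$ is the genus-$2$ curve $\cY:\ w^{2}+w=u^{5}$, on which $\psi_{1,c}$ induces the involution $(u,w)\mapsto(u+1,\ w+u^{2}+u+\delta)$ with $\delta=c^{2}+c$; note $\delta^{2}+\delta=c^{4}+c=1$, so $\delta$ is a primitive cube root of unity in $\F_{4}$. Quotienting $\cY$ by this involution: its invariants are $s=u^{2}+u$ and $t=w^{2}+(s+\delta)w$, and eliminating $u$ and $w$ — using $\delta^{2}+\delta=1$, which in particular collapses $u^{10}+u^{9}+u^{6}+u^{5}$ to $(u^{2}+u)^{5}=s^{5}$ — gives $t^{2}+(s+\delta^{2})\,t=s^{5}$. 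Substituting $t=(s+\delta^{2})\tau$ and running an Artin--Schreier reduction (subtract $\wp(\delta^{2}/(s+\delta^{2}))$; the pole at $s=\delta^{2}$ cancels completely since $s^{4}+\delta^{2}=(s+\delta^{2})^{4}$) yields the Weierstrass form $\tau^{2}+\tau=s^{3}+\delta s$, an elliptic curve with $j=0$. Finally $s\mapsto\delta^{2}s$ turns it into $\tau^{2}+\tau=s^{3}+s$, and $s\mapsto s+1$ into $\tau^{2}+\tau=s^{3}+s^{2}$, which is $\cS_2$. Hence $\cH_4/G\cong\cS_2$.

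I expect the last computation to be the main obstacle: the ``mixed'' equation $t^{2}+(s+\delta^{2})t=s^{5}$ must be carried all the way to a Weierstrass model, and stopping too early (with a residual pole in the Artin--Schreier datum) would wrongly suggest a higher genus. A cleaner, less explicit alternative avoids the normal forms: from the known higher-ramification filtration of $\aut(\cH_q)$ at a rational point (Stichtenoth), intersected with $G$, one computes different exponent $10$ at $P_\infty$, so Riemann--Hurwitz gives $g(\cH_4/G)=1$; then $\cH_4/G$ is an $\F_{16}$-maximal elliptic curve, hence isomorphic to $\cS_2$ by uniqueness of the $\F_{16}$-maximal elliptic curve.
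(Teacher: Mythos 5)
Your argument is correct, and its main route is genuinely different from the paper's. The paper takes an arbitrary cyclic $G\leq\PGU(3,4)$ of order $4$, reads off the ramification from Theorem \ref{caratteri} (the two generators of order $4$ are of type (D) with $i(\sigma)=2$, the involution is of type (C) with $i(\sigma)=q+2=6$, so $\Delta=10$), concludes $g(\cH_4/G)=1$ by Riemann--Hurwitz, and then invokes the uniqueness of the $\F_{16}$-maximal elliptic curve \cite[Thm. 77]{Hurt} to identify the quotient with $\cS_2$ --- this is exactly the ``cleaner, less explicit alternative'' you sketch at the end, so the paper's proof appears verbatim as your fallback. Your primary route instead computes $\cH_4/\langle\psi_{1,c}\rangle$ explicitly in two Artin--Schreier steps; I checked the key identities (the group law $\psi_{b,c}\psi_{b',c'}=\psi_{b+b',\,c+c'+b^4b'}$, the intermediate relation $t^2+(s+\delta^2)t=s^5$ via the norm and trace of $w$ over the fixed field, the reduction $s^5/(s+\delta^2)^2=s^3+\delta s+\wp\bigl(\delta^2/(s+\delta^2)\bigr)$, and the final normalizations) and they all hold, landing on the literal equation $Y^2+Y=X^3+X^2$ of $\cS_2$ with every map defined over $\F_4\subset\F_{16}$. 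What your explicit route buys is independence from the classification of $\F_{16}$-maximal elliptic curves and an explicit degree-$4$ covering map; what the paper's route buys is brevity and the observation that \emph{any} $C_4\leq\PGU(3,4)$ works. One terminological quibble: the $\psi_{b,c}$ with $b\neq0$ are not elations --- they are of type (D) in Lemma \ref{classificazione}; only the $\psi_{0,c}$ are.
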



We note that Theorem \ref{teoremaR3} is an unpublished result due to Rains and Zieve. 

We give an outline of the proofs of Theorems \ref{teoremaS8} and \ref{teoremaR3}. We first bound the possible degrees $d$ of putative Galois coverings $\cH_{64}\rightarrow\cS_8$ and $\cH_{27}\rightarrow\cR_3$ from the Riemann-Hurwitz formula; see \cite[Theorem 3.4.13]{Sti}. Then, for each possible value of $d$, we investigate all subgroups $G$ of $\PGU(3,64)$ and $\PGU(3,27)$ having order $d$.
The structure of $G$ allows us to estimate the contribution to the degree $\Delta$ of the different divisor for each element of $G$; see \cite[Theorem 3.8.7]{Sti}. In most cases, we get that the genus of the quotient curve is different from that of $\cS_8$ and $\cR_3$. Sometimes, a deeper investigation of the automorphism group of the quotient curve $\cH_{64}/G$ or $\cH_{27}/G$ is needed.
As a by-product, we describe in Proposition \ref{H/G} the unique quotient curve $\cX$ of $\cH_{64}$ which has the same genus
 as $\cS_8$; $\cX$ is defined over $\mathbb F_8$. Since $\cX$ is not isomorphic to $\cS_8$, a result by Fuhrmann and Torres \cite[Theorem 5.1]{FT} implies that $\cX$ is not $\mathbb F_8$-optimal.
We also describe in Proposition \ref{H27/G} the quotient curves of $\cH_{27}$ which have the same genus of $\cR_3$.
More generally, we determine in Theorem \ref{spettro} the spectrum of genera of Galois subcovers of $\cH_{27}$. For all $g>1$, we classify all subgroups $G\leq\PGU(3,27)$ such that the quotient curve $\cH_{27}/G$ has genus $g$. We point out that some quotient curves of $\cR_3$, studied by \c{C}ak\c{c}ak and \"Ozbudak \cite{CO}, are not quotient curves of $\cH_{27}$; see Corollary \ref{noncoperte}.

Results by Garcia, Stichtenoth, and Xing \cite{GSX} on the automorphism groups of the Hermitian curve $\cH_q$ fixing an $\mathbb F_{q^2}$-rational point of $\cH_q$ will be used. We also rely on classical classification results of subgroups of $\PSU(3,q)$ by Mitchell \cite{M} and Hartley \cite{H}.

We classify the elements of $\PGU(3,q)$ in terms of their orders and their action on $\PG(2,\overline\F_q)$ and $\cH_q$. In this way, we get the contribution to $\Delta$ of any element of $\PGU(3,q)$, in terms of its geometric properties; see Theorem \ref{caratteri}. This is a result of independent interest, which extends \cite[Lemma 4.1]{DM}.

The paper is organized as follows. In Section \ref{preliminari} we present the preliminary results on quotient curves of the Hermitian curve and the proof of Proposition \ref{S2}. Sections \ref{sezioneS8} and \ref{sezioneR3} contain the proofs of Theorems \ref{teoremaS8} and \ref{teoremaR3}, respectively.
Section \ref{sezioneSpettro} provides the spectrum of genera of quotient curves of $\cH_{27}$ and three examples of quotient curves of $\cR_3$ which are not quotient curves of $\cH_{27}$.

\section{Preliminary results}\label{preliminari}
Throughout this paper, $q=p^n$, where $p$ is a prime number and $n$ is a positive integer. The Deligne-Lusztig curves defined over a finite field $\mathbb{F}_q$ were originally introduced in \cite{DL}. Other than the projective line, there are three families of Deligne-Lusztig curves, named Hermitian curves, Suzuki curves and Ree curves. These curves  are maximal over some finite field containing $\mathbb{F}_q$. The following descriptions with explicit equations come from \cite{Ha,HaP}.
\begin{itemize}
\item The Hermitian curve $\mathcal H_q$ arises from the algebraic group $^2A_2(q)={\rm PGU}(3,q)$ of order $(q^3+1)q^3(q^2-1)$. It has genus $q(q-1)/2$ and is $\mathbb F_{q^2}$-maximal. Two ($\mathbb F_{q^2}$-projectively equivalent) nonsingular plane models of $\mathcal H_q$ are the Fermat curve with homogeneous  equation
\begin{equation}\label{fermat}
X^{q+1}+Y^{q+1}+T^{q+1}=0
\end{equation}
and the norm-trace curve with homogeneous equation
\begin{equation}\label{normatraccia}
Y^{q+1}=X^qT+XT^q .
\end{equation}
The automorphism group $\aut(\cH_q)$ is isomorphic to the projective unitary group $\PGU(3,q)$, and it acts on the set $\cH_q(\mathbb{F}_{q^2})$ of all $\mathbb{F}_{q^2}$-rational points of $\cH_q$ as $\PGU(3,q)$ in its usual $2$-transitive permutation representation.
\item The Suzuki curve $\mathcal S_q$ arises from the simple Suzuki group $^2B_2(q)=Sz(q)$. It has genus $q_0(q-1)$ and is $\mathbb F_{q^4}$-maximal, where $q=2q_0^2$, $q_0=2^h$, $h\geq1$. A (singular) plane model of $\mathcal S_q$ is given by the affine equation $$ Y^q+Y=X^{q_0}(X^q+X) .$$
    The automorphism group $\aut(\mathcal S_q)$ is isomorphic to a subgroup of the projective group $\PGL(4,q)$ preserving the Suzuki-Tits ovoid $O_{S}$ in $\PG(3,q)$, and it acts on $O_{S}$ as $Sz(q)$ in its usual $2$-transitive permutation representation.
\item The Ree curve $\mathcal R_q$ arises from the simple Ree group $^2G_2(q)=Ree(q)$. It has genus $3q_0(q-1)(q+q_0+1)/2$ and is $\mathbb F_{q^6}$-maximal, where $q=3q_0^2$, $q_0=3^h$, $h\geq0$. A (singular) space model of $\mathcal R_q$ is given by the affine equations $$ Y^q-Y=X^{q_0}(X^q-X),\quad Z^q-Z=X^{q_0}(Y^q-Y).$$ The automorphism group $\aut(\mathcal R_q)$ is isomorphic to a subgroup of the projective group $\PGL(7,q)$ preserving the Ree-Tits ovoid $O_{R}$ in $\PG(6,q)$, and it acts on $O_{R}$ as $Ree(q)$ in its usual $2$-transitive permutation representation.
\end{itemize}

We extend the definition of a Suzuki curve to the case $q=2$. A (singular) plane model of $\cS_2$ is given by the
$$ \cS_2:\quad Y^2+Y=X(X^2+X). $$
In particular, $\cS_2$ is an elliptic and $\F_{2^4}$-maximal curve.

The combinatorial properties of $\cH_q(\mathbb{F}_{q^2})$ can be found in \cite{HP}. The size of $\cH_q(\mathbb{F}_{q^2})$ is equal to $q^3+1$, and a line of $PG(2,q^2)$ has either $1$ or $q+1$ common points with $\cH_q(\mathbb{F}_{q^2})$, that is, it is either a $1$-secant or a chord of $\cH_q(\mathbb{F}_{q^2})$. Furthermore, a unitary polarity is associated with $\cH_q(\mathbb{F}_{q^2})$ whose isotropic points are those of $\cH_q(\mathbb{F}_{q^2})$ and isotropic lines are the $1$-secants of $\cH_q(\mathbb{F}_{q^2})$, that is, the tangents to $\cH_q$ at the points of $\cH_q(\mathbb{F}_{q^2})$.

From Group theory we need the classification of all maximal subgroups of the projective special subgroup $\PSU(3,q)$ of $\PGU(3,q)$, going back to Mitchell and Hartley; see \cite{M}, \cite{H}, \cite{HO}.
\begin{theorem} \label{Mit} Let $d={\rm gcd}(3,q+1)$. Up to conjugacy, the subgroups below give a complete list of maximal subgroups of $\PSU(3,q)$.

\begin{itemize}
\item[(i)] the stabilizer of an $\F_{q^2}$-rational point of $\cH_q$. It has order $q^3(q^2-1)/d$;
\item[(ii)] the stabilizer of an $\F_{q^2}$-rational point off $\cH_q$ $($equivalently the stabilizer of a chord of $\cH_q(\mathbb{F}_{q^2}))$. It has order $q(q-1)(q+1)^2/d$;
\item[(iii)] the stabilizer of a self-polar triangle with respect to the unitary polarity associated to $\cH_q(\mathbb{F}_{q^2})$. It has order $6(q+1)^2/d$;
\item[(iv)] the normalizer of a (cyclic) Singer subgroup. It has order $3(q^2-q+1)/d$ and preserves a triangle
in $\PG(2,q^6)\setminus\PG(2,q^2)$ left invariant by the Frobenius collineation $\Phi_{q^2}:(X,Y,T)\mapsto (X^{q^2},Y^{q^2},T^{q^2})$ of $\PG(2,\bar{\mathbb{F}}_{q})$;

{\rm for $p>2$:}
\item[(v)] ${\rm PGL}(2,q)$ preserving a conic;
\item[(vi)] $\PSU(3,p^m)$ with $m\mid n$ and $n/m$ odd;
\item[(vii)] subgroups containing $\PSU(3,p^m)$ as a normal subgroup of index $3$, when $m\mid n$, $n/m$ is odd, and $3$ divides both $n/m$ and $q+1$;
\item[(viii)] the Hessian groups of order $216$ when $9\mid(q+1)$, and of order $72$ and $36$ when $3\mid(q+1)$;
\item[(ix)] ${\rm PSL(2,7)}$ when $p=7$ or $-7$ is not a square in $\mathbb{F}_q$;
\item[(x)] the alternating group $Alt(6)$ when either $p=3$ and $n$ is even, or $5$ is a square in $\mathbb{F}_q$ but $\mathbb{F}_q$ contains no cube root of unity;
\item[(xi)] the symmetric group $Sym(6)$ when $p=5$ and $n$ is odd;
\item[(xii)] the alternating group $Alt(7)$ when $p=5$ and $n$ is odd;

{\rm for $p=2$:}
\item[(xiii)] $\PSU(3,2^m)$ with $m\mid n$ and $n/m$ an odd prime;
\item[(xiv)] subgroups containing $\PSU(3,2^m)$ as a normal subgroup of index $3$, when $n=3m$ with $m$ odd;
\item[(xv)] a group of order $36$ when $n=1$.
\end{itemize}
\end{theorem}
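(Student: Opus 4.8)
The plan is to follow the geometric strategy originating with Mitchell and Hartley, working throughout inside the projective plane $\PG(2,\overline{\F}_q)$ equipped with the Hermitian curve $\cH_q$ and its associated unitary polarity, on which $\PGU(3,q)$ acts as the full collineation group preserving this configuration. First I would classify the nontrivial elements $g\in\PGU(3,q)$ by their order and by the set of points and lines they fix on $\PG(2,\overline{\F}_q)$. Up to conjugacy one obtains a short list of element types—elations and homologies, elements fixing a non-isotropic point together with its polar line, elements stabilizing a self-polar triangle, and elements fixing only a triangle defined over the cubic extension—and the fixed structure of each element type tightly constrains how any subgroup $G\le\PSU(3,q)$ can act on the plane.

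The second step is a dichotomy according to whether $G$ leaves invariant a proper geometric object. If $G$ fixes a point or a line, then by the polarity it stabilizes a point together with its polar line; the point lies either on $\cH_q$, giving the Borel-type stabilizer of case (i), or off $\cH_q$—equivalently $G$ stabilizes a chord of $\cH_q(\fqs)$—giving case (ii). If $G$ fixes no point but stabilizes a triangle, one separates two subcases: the triangle is self-polar over $\fqs$, giving the monomial group of case (iii); or it is permuted cyclically and defined over $\PG(2,q^6)$, in which case $G$ normalizes a Singer subgroup, yielding case (iv). In each instance the stated orders follow by a direct count from the structure of the relevant point-, chord-, and triangle-stabilizers, where the factor $d=\gcd(3,q+1)$ enters through the index $[\PGU(3,q):\PSU(3,q)]$.

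The third and hardest step is the \emph{primitive} case, in which $G$ fixes no point, no line, and no triangle. Here one invokes the classification of primitive collineation groups in dimension three (Blichfeldt in characteristic zero, and Mitchell and Hartley in characteristic $p$): such a $G$, preserving a Hermitian form, must be a subfield subgroup $\PSU(3,p^m)$—possibly extended by a graph automorphism of index $3$—or the image of $\PGL(2,q)$ acting irreducibly on a conic, or else one of finitely many exceptional groups, namely the Hessian groups of orders $36,72,216$, the group ${\rm PSL}(2,7)$, and the groups $Alt(6)$, $Sym(6)$, $Alt(7)$. I expect this to be the main obstacle. The difficulty is twofold: proving that this list is exhaustive requires the full force of the dimension-three collineation classification adapted to positive characteristic and to the unitary (rather than symmetric or alternating) form; and, for each exceptional candidate, one must pin down exactly the arithmetic conditions on $q$ under which the corresponding three-dimensional unitary representation is realizable over $\fqs$, which is precisely what produces the congruence and squareness hypotheses attached to cases (v)--(xii).

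The final step is to establish maximality and to isolate the characteristic-two phenomena. For every candidate one verifies that it is not properly contained in another subgroup already on the list—discarding non-maximal embeddings such as smaller Hessian groups inside larger ones—and one records how the odd-characteristic families degenerate when $p=2$: the conic-preserving $\PGL(2,q)$ and most exceptional groups disappear, the subfield subgroups persist only with $n/m$ an odd prime, and a single group of order $36$ survives when $n=1$, giving cases (xiii)--(xv). Since the paper only needs this statement as a tool, in practice I would present it as a citation to Mitchell \cite{M}, Hartley \cite{H}, and Hoffer \cite{HO}; the sketch above indicates the line of argument those references carry out in full.
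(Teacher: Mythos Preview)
The paper does not prove this theorem at all: it is stated as a classical result with references to Mitchell \cite{M}, Hartley \cite{H}, and Hoffer \cite{HO}, and no argument is given. Your proposal correctly anticipates this, and the sketch you outline is indeed the Mitchell--Hartley strategy those references implement; your closing remark that the result should simply be cited matches exactly what the paper does.
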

In our investigation it is useful to know how an element of $\PGU(3,q)$ of a given order acts on $\PG(2,\bar{\mathbb{F}}_{q})$, and in particular on $\cH_q(\mathbb{F}_{q^2})$. This can be obtained as a corollary of Theorem \ref{Mit}, and is stated in Lemma $2.2$ with the usual terminology of collineations of projective planes; see \cite{HP}. In particular, a linear collineation $\sigma$ of $\PG(2,\bar{\mathbb{F}}_q)$ is a $(P,\ell)$-\emph{perspectivity}, if $\sigma$ preserves  each line through the point $P$ (the \emph{center} of $\sigma$), and fixes each point on the line $\ell$ (the \emph{axis} of $\sigma$). A $(P,\ell)$-perspectivity is either an \emph{elation} or a \emph{homology} according as $P\in \ell$ or $P\notin\ell$. A $(P,\ell)$-perspectivity is in  $\PGL(3,q^2)$ if and only if its center and its axis are in $\PG(2,\mathbb{F}_{q^2})$.
\begin{lemma}\label{classificazione}
For a nontrivial element $\sigma\in\PGU(3,q)$, one of the following cases holds.
\begin{itemize}
\item[(A)] ${\rm ord}(\sigma)\mid(q+1)$. Moreover, $\sigma$ is a homology whose center $P$ is a point off $\cH_q$ and whose axis $\ell$ is a chord of $\cH_q(\mathbb{F}_{q^2})$ such that $(P,\ell)$ is a pole-polar pair with respect to the unitary polarity associated to $\cH_q(\mathbb{F}_{q^2})$.
\item[(B)] ${\rm ord}(\sigma)$ is coprime with $p$. Moreover, $\sigma$ fixes the vertices $P_1,P_2,P_3$ of a non-degenerate triangle $T$.
\begin{itemize}
\item[(B1)] The points $P_1,P_2,P_3$ are $\fqs$-rational, $P_1,P_2,P_3\notin\cH_q$ and the triangle $T$ is self-polar with respect to the unitary polarity associated to $\cH_q(\mathbb{F}_{q^2})$. Also, $\ord(\sigma)\mid(q+1)$.
\item[(B2)] The points $P_1,P_2,P_3$ are $\fqs$-rational, $P_1\notin\cH_q$, $P_2,P_3\in\cH_q$. 
     Also, $\ord(\sigma)\mid(q^2-1)$ and $\ord(\sigma)\nmid(q+1)$.
\item[(B3)] The points $P_1,P_2,P_3$ have coordinates in $\F_{q^6}\setminus\F_{q^2}$, $P_1,P_2,P_3\in\cH_q$. 
    Also, $\ord(\sigma)\mid (q^2-q+1)$.
\end{itemize}
\item[(C)] ${\rm ord}(\sigma)=p$. Moreover, $\sigma$ is an elation whose center $P$ is a point of $\cH_q$ and whose axis $\ell$ is a tangent of $\cH_q(\mathbb{F}_{q^2})$ such that $(P,\ell)$ is a pole-polar pair with respect to the unitary polarity associated to $\cH_q(\mathbb{F}_{q^2})$.
\item[(D)] ${\rm ord}(\sigma)=p$ with $p\ne2$, or ${\rm ord}(\sigma)=4$ and $p=2$. Moreover, $\sigma$ fixes an $\fqs$-rational point $P$, with $P \in \cH_q$, and a line $\ell$ which is a tangent  of $\cH_q(\mathbb{F}_{q^2})$, such that $(P,\ell)$ is a pole-polar pair with respect to the unitary polarity associated to $\cH_q(\mathbb{F}_{q^2})$.
\item[(E)] $p\mid{\rm ord}(\sigma)$, $p^2\nmid{\rm ord}(\sigma)$, and ${\rm ord}(\sigma)\ne p$. Moreover, $\sigma$ fixes two $\fqs$-rational points $P,Q$, 
     with $P\in\cH_q$, $Q\notin\cH_q$. 
\end{itemize}
\end{lemma}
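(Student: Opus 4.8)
The plan is to work at the linear level and argue by Jordan form. Fix a nondegenerate Hermitian form $\la\,,\ra$ on $\overline{\F}_q^3$ whose associated unitary polarity is the one attached to $\cH_q(\fqs)$, and lift the nontrivial collineation $\sigma$ to a matrix $A\in{\rm GU}(3,q)\le{\rm GL}(3,\fqs)$, so that $\la Av,Aw\ra=\la v,w\ra$ for all $v,w$, where $\bar x=x^q$. Writing $A=A_sA_u=A_uA_s$ for the Jordan decomposition over $\overline{\F}_q$, with $A_s$ semisimple and $A_u$ unipotent, I would first record that a unipotent $3\times3$ matrix has order $1$, $p$ (Jordan type $(3)$ with $p>2$, or type $(2,1)$) or $4$ (type $(3)$ with $p=2$), and that the centralizer of a regular unipotent consists of matrices with scalar semisimple part. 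Since $\ord(A_s)$ is prime to $p$, this already gives $p^2\nmid\ord(\sigma)$, shows $p\mid\ord(\sigma)$ iff $A_u\neq I$, and splits the analysis into five regimes: $A_u=I$ with $A_s$ having a repeated eigenvalue $\to$ (A); $A_u=I$ with $A_s$ regular $\to$ (B); $A_u$ of type $(3)$ (forcing $A_s$ scalar) $\to$ (D); $A_u$ of type $(2,1)$ with $A_s$ scalar $\to$ (C); $A_u$ of type $(2,1)$ with $A_s$ non-scalar $\to$ (E). In each regime the fixed points and fixed lines of $\sigma$ on $\PG(2,\overline{\F}_q)$ are read off from the eigenspaces of $A$ and of its transpose.

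In the semisimple case write $A=\diag(\lambda_1,\lambda_2,\lambda_3)$ in an eigenbasis $v_1,v_2,v_3$. Invariance of $\la\,,\ra$ gives $\lambda_i\lambda_j^q\la v_i,v_j\ra=\la v_i,v_j\ra$ and $\lambda_i^{q+1}\la v_i,v_i\ra=\la v_i,v_i\ra$, so $[v_i]\notin\cH_q$ forces $\lambda_i^{q+1}=1$, while $\la v_i,v_j\ra\neq0$ forces $\lambda_i\lambda_j^q=1$; in particular eigenspaces for distinct eigenvalues are orthogonal. If exactly two of the $\lambda_i$ coincide, $\sigma$ is a $(P,\ell)$-homology with $\ell=P^\perp$ by orthogonality; $P\notin\ell$ forces $P$ non-isotropic, hence $\ell$ a chord, and $\ord(\sigma)=\ord(\lambda_1/\lambda_3)\mid q+1$ — case (A). If the $\lambda_i$ are pairwise distinct, $[v_1],[v_2],[v_3]$ form a non-degenerate triangle; a short analysis of the possible isotropy patterns, using non-degeneracy of $\la\,,\ra$ and the fact that an off-$\cH_q$ eigenpoint must be orthogonal to the other two (since $\lambda_i^{q+1}=1$ turns $\lambda_i\lambda_j^q=1$ into $\lambda_i=\lambda_j$), leaves exactly three possibilities: all three vertices non-isotropic and mutually orthogonal, so the triangle is self-polar and $\ord(\sigma)\mid q+1$ — case (B1); exactly one non-isotropic and the other two isotropic with $\la v_2,v_3\ra\neq0$, forcing $\lambda_3=\lambda_2^{-q}$, so $\ord(\sigma)=\ord(\lambda_2)\mid q^2-1$ and $\ord(\sigma)\nmid q+1$ (else $\lambda_2=\lambda_3$) — case (B2); all three isotropic, which is impossible if the characteristic polynomial of $A$ over $\fqs$ has an $\fqs$-rational root (that would reproduce (B1) or (B2), the factorization linear$\,\times\,$irreducible-quadratic being excluded because $A\in{\rm GU}(3,q)$), so that polynomial is irreducible, $\Phi_{q^2}$ cycles the three vertices, their coordinates lie in $\F_{q^6}\setminus\fqs$, and, writing the eigenvalues as $\mu,\mu^{q^2},\mu^{q^4}$ and using that the multiset of eigenvalues of an element of ${\rm GU}(3,q)$ is invariant under $\lambda\mapsto\lambda^{-q}$, one gets $\mu^{q^3+1}=1$ and then $\ord(\sigma)\mid(q^3+1)/(q+1)=q^2-q+1$ — case (B3).

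In the non-semisimple case: if $A_u$ has type $(2,1)$ and $A_s$ is scalar, then $A=I+N$ with $N^2=0$, ${\rm rank}\,N=1$; $\sigma$ is an elation with axis $\ell=\ker N$ and centre $P={\rm Im}\,N\subseteq\ell$, and expanding $\la(I+N)x,(I+N)y\ra=\la x,y\ra$ with $y\in\ker N$ gives ${\rm Im}\,N\perp\ker N$, i.e.\ $P=\ell^\perp$; since $P\subseteq\ell$, $P$ is isotropic and $\ell$ its tangent — case (C), $\ord(\sigma)=p$. If $A_u$ has type $(3)$ (so $A_s$ is scalar), $A$ has a unique invariant flag $\la v\ra\subset L\subset V$; the dual statement ($L^\perp$ equals the unique invariant line $\la v\ra$) forces $\la v\ra$ isotropic and $L=\la v\ra^\perp$, so $P=[v]$ is an $\fqs$-rational point of $\cH_q$ (the kernel of an $\fqs$-matrix) with tangent $\ell=L$ — case (D), $\ord(\sigma)=p$ if $p>2$ and $4$ if $p=2$. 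Finally, if $A_u$ has type $(2,1)$ and $A_s$ is non-scalar, commutation forces $A_s$ to have eigenvalues $\lambda,\lambda,\mu$ with $\lambda\neq\mu$ (both in $\fqs$ for degree reasons) and $A_u$ a regular unipotent on the $\lambda$-eigenspace $E_\lambda$; then $\sigma$ fixes precisely the two points $Q=[E_\mu]$ and $P=[\,{\rm Im}(A_u-I)\,]$, the latter lying on the line $[E_\lambda]$, with $\ord(\sigma)=p\cdot\ord(\lambda/\mu)$, so $p\mid\ord(\sigma)$, $p^2\nmid\ord(\sigma)$, $\ord(\sigma)\neq p$; the orthogonality relations between $E_\mu$ and $E_\lambda$ (the latter not anisotropic, as it carries a non-trivial unipotent) together with non-degeneracy give $Q\notin\cH_q$ and $P\in\cH_q$ — case (E); both fixed points are $\fqs$-rational since $A\in{\rm GL}(3,\fqs)$.

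The mechanical parts — the reduction to four Jordan types, the identification of homologies and elations, and reading eigenvalue orders off the diagonal — are routine. The real work is the bookkeeping with $\la\,,\ra$: showing that exactly the three isotropy patterns of case (B) occur, and the elementary but slightly delicate divisibility step ``$\mu^{q^3+1}=1\Rightarrow\ord(\sigma)\mid q^2-q+1$'' in (B3), which requires separating the $3$-parts of $q+1$ and of $q^2-q+1$. (The statement is presented as a corollary of Theorem~\ref{Mit}; one may alternatively extract the incidence data from the list of maximal subgroups, inside one of which $\sigma$ necessarily lies, but the direct computation above is more transparent and delivers the order constraints simultaneously.)
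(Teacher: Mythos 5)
Your proof is correct, but it follows a genuinely different route from the paper's. The paper derives the possible fixed-point configurations from the classical results of Mitchell and Hartley (cited at each step), then normalizes $\sigma$ to explicit matrix shapes in the Fermat or norm-trace model to read off the order constraints; the two delicate exclusions — vertices in $\PG(2,\F_{q^4})\setminus\PG(2,\F_{q^2})$, and a Singer-type triangle with a vertex off $\cH_q$ — are handled respectively by an orbit-length argument on the lines $P_iP_j$ and by a counting argument comparing the number of Singer triangles with the number of triangles inscribed in $\cH_q$ with vertices in $\PG(2,q^6)\setminus\PG(2,q^2)$. You instead run everything through the Jordan decomposition in ${\rm GU}(3,q)$ and the invariance of the eigenvalue multiset under $\lambda\mapsto\lambda^{-q}$, which kills both problematic sub-cases by a two-line eigenvalue computation and delivers the divisibility conditions simultaneously; this is self-contained (no appeal to \cite{M,H}) and arguably more transparent, at the cost of redoing by hand facts the paper needs from those references anyway for Theorem \ref{Mit}. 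Two small points to tidy up: (i) your blanket claim that $p^2\nmid\ord(\sigma)$ fails precisely for $p=2$ and $A_u$ regular unipotent, where $\ord(\sigma)=4$ — harmless, since your case (D) handles it explicitly, but the sentence as written is false; (ii) the $\fqs$-rationality of the vertices in (B1)--(B2) and the impossibility of ``all three vertices isotropic with an $\fqs$-rational eigenvalue'' both rest on the Gram-matrix/Hermitian-symmetry bookkeeping that you only gesture at (``a short analysis of the isotropy patterns'') and on the exclusion of the linear-times-irreducible-quadratic factorization, which you state only inside the (B3) paragraph; these deductions do go through (e.g.\ if all three vertices were isotropic with Gram matrix over $\fqs$, nondegeneracy plus $\la v_j,v_i\ra=\la v_i,v_j\ra^q$ forces two distinct eigenvalues to coincide), but they should be written out since they are exactly where the case split is decided.
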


\begin{proof}
Let $p\mid{\rm ord}(\sigma)$, ${\rm ord}(\sigma)\ne p$, and $(p,\ord(\sigma))\ne(2,4)$. By \cite[\S 2 p. 212]{M} and \cite[pp. 141-142]{H}, the fixed elements of $\sigma$ are two points $P,Q$, the line $PQ$, and another line $\ell$ through $P$. Also, $p^2\nmid\ord(\sigma)$. The Frobenius collineation $\Phi_{q^2}:(X,Y,T)\mapsto(X^{q^2},Y^{q^2},T^{q^2})$ commutes with $\sigma$. Hence $\Phi_{q^2}$ acts on $\{P,Q\}$, and $P,Q$ are $\F_{q^4}$-rational. If $R\in\{P,Q\}$ is the pole of $PQ$, then $R\in\cH_q$. Since $\cH_q$ has no points with coordinates in $\F_{q^4}\setminus\F_{q^2}$, $R$ is $\fqs$-rational. Thus the line $PQ$ is a tangent of $\cH_q(\mathbb{F}_{q^2})$ at $R$. Hence the pole of $\ell$ is $\fqs$-rational and off $\ell$. Therefore $R=P$ and the assertions of Case (E) follow.

Let $\ord(\sigma)=p$, and let $\cH_q$ have equation \eqref{normatraccia}. Up to conjugation, $\sigma$ is contained in the Sylow $p$-subgroup $S$ of $\PGU(3,q)$ defined by $S=\left\{ \tau_{1,b,c} \mid b,c\in\fqs,\;b^{q+1}=c^q+c \right\}$, where
\begin{equation}\label{tau}
\tau_{1,b,c}=\begin{pmatrix} 1 & b^q & c \\ 0 & 1 & b \\ 0 & 0 & 1\end{pmatrix}.
\end{equation}
Hence $\sigma$ fixes the $\fqs$-rational point $P_\infty=(1,0,0)\in\cH_q$ and its polar line $\ell_\infty:T=0$, which satisfies $\ell_\infty\cap\cH_q=\{P_\infty\}$. If $p=2$, then $\sigma$ is of type $\tau_{1,0,c}$, and $\sigma$ is an elation with center $P_\infty$ and axis $\ell_\infty$, which is Case (C). If $p\ne2$, then by \cite[\S 2 p. 212]{M} $\sigma=\tau_{1,b,c}$ satisfies either Case (C) or Case (D). By direct computation, Cases (C) and (D) correspond to $b=0$ and $b\ne0$, respectively.

Let $p\nmid\ord(\sigma)$. By \cite[\S 2 p. 212]{M} and \cite[pp. 141-142]{H}, either $\sigma$ fixes a point $P$ and a line $\ell$ pointwise, or $\sigma$ fixes exactly three non-collinear points.

Assume that the former case holds. Then $P$ and $\ell$ are fixed by $\Phi_{q^2}$. Hence, they are defined over $\fqs$. We have $P\notin\cH_q$. In fact, if $P\in\cH_q$, then the tangent to $\cH_q$ at $P$ intersect $\ell$ at an $\fqs$-rational point $Q\notin\cH_q$, and the $\fqs$-rational pole $R$ of $\ell$ lies on $\ell$, hence also on $\cH_q$. For any $\fqs$-rational point $\bar P$ of $\ell\setminus\{R\}$, we have that $\bar P\notin\cH_q$ and the polar line of $\bar P$ intersects $\ell$ at another $\fqs$-rational point of $\ell$. Since the line $PQ$ is the polar line of $P$, this is a contradiction. Therefore, $\ell$ is the polar line of $P$, and $\ell$ is a chord of $\cH_q(\mathbb{F}_{q^2})$. Now we show that $\ord(\sigma)\mid(q+1)$. Let $\cH_q$ have equation \eqref{fermat}. Up to conjugation, $P=(0,0,1)$ and $\ell:T=0$. Hence $\sigma$ is a diagonal matrix of the form ${\rm diag}(\lambda,1,1)$, which implies $\ord(\sigma)=\ord(\lambda)$ with $\ord(\lambda)\mid(q+1)$. This shows that $\sigma$ satisfies Case (A).

Now assume that $\sigma$ fixes exactly the vertices $P_1,P_2,P_3$ of a triangle $T$.
\begin{itemize}
\item Suppose that $P_1$, $P_2$, and $P_3$ are $\fqs$-rational.
If $P_1,P_2,P_3\notin\cH_q$, then $P_jP_k$ is the polar line of $P_i$, for $\{i,j,k\}=\{1,2,3\}$. Let $\cH_q$ have equation \eqref{fermat}. Up to conjugation $P_1$, $P_2$, and $P_3$ are the fundamental points. Thus $\sigma$ is a diagonal matrix and $\ord(\sigma)\mid(q+1)$, which is Case (B1).
Assume $P_2\in\cH_q$. Then the polar line $\ell_2$ of $P_2$ is either $P_1P_2$ or $P_2P_3$, say $P_1P_2$. The polar line $\ell_3$ of $P_3$ is either $P_1P_3$ or $P_2P_3$, whence $P_3\in\ell_3$ and $P_3\in\cH_q$. Then $\ell_3\cap\cH_q=\{P_3\}$, and hence $\ell_3$ is $P_1P_3$. This implies that $P_2P_3$ is the polar line of $P_1$ and $P_1\notin\cH_q$. Let $\cH_q$ have equation \eqref{normatraccia}. Up to conjugation, $P_2=(1,0,0)$ and $P_3=(0,0,1)$. Thus $P_1=(0,1,0)$ and $\sigma$ is the diagonal matrix ${\rm diag}(\mu^{q+1},\mu,1)$ for some $\mu\in\F_{q^2}^*$. Since $\sigma$ is not a homology, $\ord(\sigma)=\ord(\mu)$ does not divide $q+1$. This is Case (B2).
\item Suppose that $P_1$ has coordinates in $\F_{q^6}\setminus\fqs$. The orbit of $P_1$ under $\Phi_{q^2}$ is $\{P_1,P_2,P_3\}$. Hence, $P_2$ and $P_3$ have coordinates in $\F_{q^6}\setminus\fqs$ as well. Assume $P_1\in\cH_q$. Then the polar line $\ell_1$ of $P_1$ is tangent to $\cH_q$ at $P_1$ and $\ell_1$ has exactly another point $P$ in common with $\cH_q$, which is then fixed by $\sigma$. Up to reordering, $P=P_2$. In the same way, $P_3\in\cH_q$ and the polar line of $P_1,P_2,P_3$ are $P_1P_2$, $P_2P_3$, $P_3P_1$, respectively. Let $H\leq\PGU(3,q)$ be the Singer group consisting of the elements of $\PGU(3,q)$ fixing the triangle $T$. Then $H$ has order $q^2-q+1$; see \cite{M} and \cite{H}. Since $\sigma\in H$, $\ord(\sigma)\mid(q^2-q+1)$ and $\sigma$ satisfies Case (B3).

Elements satisfying Case (B3) do exist; see for instance \cite[Lemma 4.4]{CKT}. The number $k$ of triangles $T$ whose vertices $Q_1, Q_2, Q_3$ are such that $Q_i\in\PG(2,q^6)\setminus\PG(2,q^2)$ and there exists some $\sigma\in\PGU(3,q)$ stabilizing $T$, is equal to the index in $\PGU(3,q)$ of the normalizer $N$ of $H$. By \cite{M} and \cite{H} (see Case (iv) in Theorem \ref{Mit}), $|N|=3(q^2-q+1)$. Hence $k=q^3(q+1)^2(q-1)/3$. By direct computation, $k$ is equal to the number of triangles $T'$ whose vertices $Q_1', Q_2', Q_3'$ are such that $Q_i'\in\PG(2,q^6)\setminus\PG(2,q^2)$ and $Q_i'\in\cH_q$, $i=1,2,3$. Therefore, it is not possible that $P_1,P_2,P_3$ have coordinates in $\F_{q^6}\setminus\fqs$ and $P_1\notin\cH_q$.
\item The case that $P_1$ has coordinates in $\F_{q^4}\setminus\fqs$ cannot occur. In fact, since $\Phi_{q^2}$ acts on $\{P_1,P_2,P_3\}$, if $P_1\in\PG(2,\F_{q^4})\setminus\PG(2,\F_{q^2})$, then up to reordering $P_2\in\PG(2,\F_{q^4})\setminus\PG(2,\F_{q^2})$ and $P_3\in\PG(2,q^2)$. Let $i,j\in\{1,2,3\}$, $i\ne j$. By \cite[\S 2 p. 212]{M} and \cite[pp. 141-142]{H}, any power of $\sigma$ either fixes the line $P_iP_j$ pointwise or has no fixed points on $P_iP_j\setminus\{P_i,P_j\}$. Thus $\sigma$ has long orbits on $P_iP_j\setminus\{P_i,P_j\}$. In particular, $\ord(\sigma)$ divides the number of $\fqs$-rational points of both $P_1P_2$ and $P_1P_3\setminus\{P_3\}$, a contradiction.
\end{itemize}
\end{proof}
\vspace*{.2cm}

Throughout the paper, a nontrivial element of $\PGU(3,q)$ is said to be of type (A), (B), (B1), (B2), (B3), (C), (D), or (E), as given in Lemma \ref{classificazione}. Moreover, $G$ always stands for a subgroup of $\PGU(3,q)$.

\begin{lemma}\label{azione}
Let $H$ be a normal subgroup of $G$. Let $A$ be the set of points of $\PG(2,\bar\F_q)$ fixed by every element of $H$, and $B=A\cap\cH_q$. Then $G$ acts on $B$ and on $A\setminus B$.
\end{lemma}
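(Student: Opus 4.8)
The plan is to verify the two claimed invariances directly from the definition of $A$ and $B$, using only that $H$ is normal in $G$ and that $\cH_q$ is $G$-invariant. First I would show that $G$ stabilizes the set $A$. Let $g\in G$ and $P\in A$, so that $h(P)=P$ for every $h\in H$. For an arbitrary $h\in H$, since $H\trianglelefteq G$ the element $g^{-1}hg$ lies in $H$, hence fixes $P$; applying $g$ to the identity $(g^{-1}hg)(P)=P$ gives $h(g(P))=g(P)$. As $h\in H$ was arbitrary, $g(P)\in A$. Thus $g(A)\subseteq A$ for every $g\in G$, and replacing $g$ by $g^{-1}$ yields $g(A)=A$, so $G$ acts on $A$.

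Next I would observe that $\cH_q$ is preserved by every element of $\PGU(3,q)$, in particular by $G$, since $\aut(\cH_q)\cong\PGU(3,q)$ acts on the curve (equivalently, $\PGU(3,q)$ consists of the projectivities of $\PG(2,\bar\F_q)$ leaving $\cH_q$ invariant). Hence for $g\in G$ we have $g(\cH_q)=\cH_q$, and combining this with $g(A)=A$ gives
\[
g(B)=g(A\cap\cH_q)=g(A)\cap g(\cH_q)=A\cap\cH_q=B,
\]
so $G$ acts on $B$, and consequently also on the complement $A\setminus B$ (within $A$), since $g(A\setminus B)=g(A)\setminus g(B)=A\setminus B$.

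There is essentially no obstacle here: the only point requiring care is the normality of $H$, which is exactly what makes $g^{-1}hg\in H$ and hence guarantees that $G$ — not merely $H$ or the normalizer of a single fixed point — permutes the common fixed locus $A$. The decomposition $A=B\sqcup(A\setminus B)$ into the part on $\cH_q$ and the part off $\cH_q$ is then automatically $G$-stable because $\cH_q$ itself is $G$-invariant. This lemma will be applied repeatedly in the later sections: when $H$ is a characteristic (e.g. Sylow or derived) subgroup of $G$, its fixed points on $\PG(2,\bar\F_q)$ — which by Lemma \ref{classificazione} are highly constrained (a single point, a triangle, etc.) — form a small $G$-set, and the induced $G$-action on it restricts the possible structure of $G$.
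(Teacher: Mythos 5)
Your proof is correct; the paper actually states Lemma \ref{azione} with no proof at all, evidently regarding it as routine, and your argument (conjugation $g^{-1}hg\in H$ by normality to show $g(A)=A$, plus $G\leq\PGU(3,q)=\aut(\cH_q)$ preserving $\cH_q$ to split off $B$) is exactly the standard verification the authors left implicit.
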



\begin{lemma}\label{pgruppo1}
Let $H$ be a $m$-subgroup of $\PGU(3,q)$, where $m\notin\{2,3\}$ is a prime divisor of $q+1$. Then $H$ is abelian. Also, the nontrivial elements of $H$ are either of types {\rm(A)} or {\rm(B1)}, and in the latter case the fixed triangle $T$ is the same for every element of $H$. In addiction, if $H$ is a Sylow $m$-subgroup of $\PGU(3,q)$, then the unique fixed points of $H$ are the vertices of $T$ and $H$ is the direct product of two cyclic groups whose nontrivial elements are of type {\rm(A)}.
\end{lemma}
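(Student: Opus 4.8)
The plan is to restrict the types of the elements of $H$, then to trap $H$ inside a group of diagonal matrices by means of a central element, and finally to identify its structure in the Sylow case.

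\emph{Step 1: types.} Since $p\mid q$, we have $p\nmid q+1$, so $m\neq p$ and $m\geq5$; hence every nontrivial $\sigma\in H$ has order a power of $m$, coprime to $p$, and by Lemma \ref{classificazione} it is of type (A), (B1), (B2), or (B3). Types (B2) and (B3) are excluded arithmetically: as $m$ is odd, $m\nmid\gcd(q-1,q+1)$, hence $m\nmid q-1$, so the largest power of $m$ dividing $q^2-1$ equals the one dividing $q+1$; thus $\ord(\sigma)\mid q^2-1$ would force $\ord(\sigma)\mid q+1$, contrary to type (B2). And $q^2-q+1\equiv3\pmod{q+1}$ gives $\gcd(q+1,q^2-q+1)\mid3$, so $m\nmid q^2-q+1$ since $m\neq3$, contrary to type (B3). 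Hence every nontrivial element of $H$ is of type (A) or (B1).

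\emph{Step 2: $H$ is abelian.} Assume $H\neq\{1\}$ and pick a nontrivial $\sigma$ in the (nontrivial) centre of $H$. If $\sigma$ is of type (B1) it has three distinct eigenvalues, so each $h\in H$, commuting with $\sigma$, preserves each of its three eigenlines; thus $H$ fixes the self-polar triangle $T$ of $\sigma$ pointwise and, being diagonal in the corresponding basis, is abelian. If $\sigma$ is of type (A), a homology with centre $P$ and axis $\ell$, then each $h\in H$ preserves the two eigenspaces of $\sigma$, hence fixes $P$ and leaves $\ell$ invariant; the induced group $H|_\ell\leq\PGL(2,q)$ is an $m$-group, so it is cyclic. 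If $H|_\ell=\{1\}$, then $H$ consists of $(P,\ell)$-homologies and is cyclic; otherwise a generator of $H|_\ell$ is a nontrivial semisimple collineation of $\ell$ with exactly two fixed points $Q_1,Q_2$, so $H$ fixes the triangle $\{P,Q_1,Q_2\}$ pointwise and is abelian. In every case $H$ is abelian with semisimple elements, hence simultaneously diagonalizable over $\overline{\F}_q$; writing its elements as diagonal matrices in a fixed basis, the ones with three distinct diagonal entries are exactly the type-(B1) elements, whose unique fixed triangle is therefore the common coordinate triangle $T$, while the ones with a repeated entry are homologies, i.e. of type (A). This proves the first three assertions.

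\emph{Step 3: the Sylow case.} Let $m^{v}$ be the largest power of $m$ dividing $q+1$. From $q^3+1=(q+1)(q^2-q+1)$, $q^2-1=(q-1)(q+1)$, together with the divisibilities of Step 1, the $m$-part of $|\PGU(3,q)|$ is $m^{2v}$, so $|H|=m^{2v}$ with $v\geq1$. By Step 2, $H$ lies in the pointwise stabilizer $\mathcal D$ of a triangle $T$. If $T$ were not self-polar, then by Lemma \ref{classificazione} every nontrivial element of $\mathcal D$ would be of type (B2) or (B3), forcing $\mathcal D$ to be cyclic of order dividing $q^2-1$ or $q^2-q+1$, whence $|H|\leq m^{v}<m^{2v}$, a contradiction. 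Hence $T$ is self-polar; choosing coordinates so that $T$ is the fundamental triangle and $\cH_q$ has equation \eqref{fermat}, one gets $\mathcal D=\{\diag(a,b,1):a^{q+1}=b^{q+1}=1\}\cong C_{q+1}\times C_{q+1}$, so $H$ is its unique Sylow $m$-subgroup $\{\diag(a,b,1):a^{m^{v}}=b^{m^{v}}=1\}$. Writing $H_1=\{\diag(a,1,1):a^{m^{v}}=1\}$ and $H_2=\{\diag(1,b,1):b^{m^{v}}=1\}$ displays $H=H_1\times H_2$ as a direct product of two cyclic groups, each of whose nontrivial elements is a homology (with centre a vertex of $T$ and axis the opposite side), hence of type (A). Finally, since $m\geq5$, $H$ contains $\diag(a,a^2,1)$ with $a$ of order $m$, which has three distinct eigenvalues and so fixes only the three vertices of $T$; therefore the points fixed by all of $H$ are exactly the vertices of $T$.

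\emph{Main obstacle.} The delicate point is the abelianity in Step 2 when the chosen central element is a homology: one must control $H$ via its action on the axis $\ell$, using that $m$-subgroups of $\PGL(2,q)$ are cyclic, and treat the degenerate subcase $H|_\ell=\{1\}$ separately. Step 3 likewise depends on Lemma \ref{classificazione} to rule out non-self-polar triangles, which is what makes the order count decisive.
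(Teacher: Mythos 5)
Your overall route is genuinely different from the paper's and is mostly sound. The paper's proof is a one-liner by comparison: since $m\nmid q(q-1)(q^2-q+1)$ and $m^s$ exactly divides $q+1$, the $m$-part of $|\PGU(3,q)|$ is the square $m^{2s}$, and the explicit diagonal group $K=\{\diag(\lambda,\mu,1):\lambda^{m^s}=\mu^{m^s}=1\}$ in the Fermat model is an abelian Sylow $m$-subgroup; Sylow's theorem conjugates $H$ into $K$, and every claimed property is then read off from $K$. Your Steps 1 and 2 instead prove abelianity intrinsically via a central element; this works (with the small caveat that commuting with $\sigma$ in $\PGU(3,q)$ a priori only yields a permutation of the three eigenlines of a type (B1) element — trivial here because an $m$-group with $m\geq5$ cannot act nontrivially on three points), and Step 2 correctly yields simultaneous diagonalization and the uniqueness of the (B1)-triangle. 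What your approach buys is independence from the Sylow conjugation for the non-Sylow assertions; what it costs is length and one genuine error, described next.

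The flaw is in Step 3, in the sentence ``If $T$ were not self-polar, then by Lemma \ref{classificazione} every nontrivial element of $\mathcal{D}$ would be of type (B2) or (B3), forcing $\mathcal{D}$ to be cyclic of order dividing $q^2-1$ or $q^2-q+1$.'' That claim is false: a homology of type (A) fixes pointwise every triangle consisting of its center and two points of its axis, and such triangles need not be self-polar; hence the pointwise stabilizer $\mathcal{D}$ of a non-self-polar triangle can contain the full cyclic group of $(P,\ell)$-homologies of order $q+1$ and is not of the shape you assert. The repair stays within your framework: by Step 1 the nontrivial elements of $H$ are of type (A) or (B1). If $H$ contains a (B1) element, its fixed triangle is self-polar and must equal $T$. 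Otherwise all nontrivial elements of $H$ are homologies whose centers are vertices of $T$ and whose axes are the opposite sides, each a pole--polar pair; if all share one center then $H$ embeds in the cyclic group of order $q+1$, giving $|H|\leq m^{v}<m^{2v}$, while if two distinct vertices occur as centers then each lies on the polar of the other and the third vertex is the pole of their join, so $T$ is self-polar. With that substitution, the remainder of Step 3 (identification of $\mathcal{D}\cong C_{q+1}\times C_{q+1}$ and of $H$ as its unique Sylow $m$-subgroup $H_1\times H_2$, with the element $\diag(a,a^2,1)$ pinning down the fixed points) is correct.
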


\begin{proof}
Since $p\notin\{2,3\}$, the maximum power of $m$ dividing $|\PGU(3,q)|$ is a square, say $m^{2s}$.
Let $\cH_q$ have equation \eqref{fermat}, and define
\begin{equation}\label{sylow1}
K=\{\diag(\lambda,\mu,1)\mid \lambda^s=\mu^s=1\}\cong\{\diag(\lambda,1,1)\mid \lambda^s=1\}\times\{\diag(1,\mu,1)\mid \mu^s=1\}.
\end{equation}
Then $K$ is an abelian Sylow $m$-subgroup of $\PGU(3,q)$, whose fixed points are the fundamental points. Also, the nontrivial elements of $K$ are either of type {\rm(A)} or {\rm(B1)}. Up to conjugation, $H$ is contained in $K$ and the claim follows.
\end{proof}

\begin{lemma}\label{pgruppo2}
Let $H$ be a $m$-subgroup of $\PGU(3,q)$, where $m$ is an odd prime divisor of $q-1$. Then $H$ is abelian and the unique fixed points of $H$ are the vertices of a triangle $T$.
\end{lemma}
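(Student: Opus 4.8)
The plan is to follow the template of the proof of Lemma~\ref{pgruppo1}: produce an explicit abelian Sylow $m$-subgroup of $\PGU(3,q)$, read off its fixed points, and then transfer the conclusion to an arbitrary nontrivial $m$-subgroup $H$ by Sylow's theorem. Since $m\mid q-1$ while $p\mid q$, we have $m\ne p$, so every $m$-subgroup of $\PGU(3,q)$ has order coprime to $p$; by Lemma~\ref{classificazione} its nontrivial elements are then of type (A) or (B).

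First I would compute the $m$-part of $|\PGU(3,q)|=q^3(q^2-1)(q^3+1)$. Writing $q^2-1=(q-1)(q+1)$ and $q^3+1=(q+1)(q^2-q+1)$, and using that $m$ is an odd prime with $m\mid q-1$ — so $m\nmid q$, $m\nmid q+1$, and $q^2-q+1\equiv 1\pmod m$ forces $m\nmid q^2-q+1$ — one sees that the exact power of $m$ dividing $|\PGU(3,q)|$ equals the exact power $m^a$ of $m$ dividing $q-1$. Next, taking $\cH_q$ with equation \eqref{normatraccia}, the analysis of Case (B2) in Lemma~\ref{classificazione} shows that the elements $[\diag(\mu^{q+1},\mu,1)]$ with $\mu\in\F_{q^2}^*$ form a subgroup $C\le\PGU(3,q)$; it is cyclic of order $q^2-1$, since $\mu\mapsto[\diag(\mu^{q+1},\mu,1)]$ is injective ($\diag(\mu^{q+1},\mu,1)$ is scalar only for $\mu=1$). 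As $m^a\mid q-1\mid q^2-1$, the unique subgroup $S\le C$ of order $m^a$ is a Sylow $m$-subgroup of $\PGU(3,q)$, and it is cyclic, hence abelian.

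To identify $\mathrm{Fix}(S)$, note that a generator of $S$ is $\sigma_0=[\diag(\zeta^{q+1},\zeta,1)]$ with $\zeta$ of order $m^a$; since $m^a\mid q-1$ we have $\zeta\in\F_q^*$, hence $\zeta^{q+1}=\zeta^2$, and the scalars $\zeta^2,\zeta,1$ are pairwise distinct because $\zeta\notin\{0,1\}$ and $\zeta$ has odd order $>1$ (so $\zeta\ne-1$). Thus $\sigma_0$, a diagonal projectivity with distinct eigenvalues, fixes exactly the three fundamental points, the vertices of a non-degenerate triangle (equivalently, $\sigma_0$ is of type (B2)); and $\mathrm{Fix}(S)=\mathrm{Fix}(\sigma_0)$ because $S=\langle\sigma_0\rangle$.

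Finally, for an arbitrary nontrivial $m$-subgroup $H\le\PGU(3,q)$, Sylow's theorem embeds $H$ into a conjugate of $S$, which is cyclic; hence $H$ is cyclic, in particular abelian. Choosing a generator $\sigma$ of $H$, its order is a power of $m$ dividing $m^a$, hence coprime to $p$, not dividing $q+1$ (no odd prime power $>1$ divides $\gcd(q-1,q+1)\in\{1,2\}$), and not dividing $q^2-q+1$; by Lemma~\ref{classificazione}, $\sigma$ is of type (B2), fixing exactly the vertices $P_1,P_2,P_3$ of a non-degenerate triangle $T$. Since $H=\langle\sigma\rangle$ we conclude $\mathrm{Fix}(H)=\mathrm{Fix}(\sigma)=\{P_1,P_2,P_3\}$, which is the claim. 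The argument is essentially routine once the $m$-part of $|\PGU(3,q)|$ is in hand; the only point needing care — and the natural place for a slip — is checking that $m$ divides none of $q$, $q+1$, $q^2-q+1$, which is exactly what makes $S$ a genuine cyclic Sylow subgroup and forces the generator of $H$ into Case (B2).
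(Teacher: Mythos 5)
Your proposal is correct and follows essentially the same route as the paper: the paper also exhibits the cyclic group $K=\{\diag(a^{q+1},a,1)\mid a\in\F_{q^2}^*\}$ in the model \eqref{normatraccia}, observes that it contains an abelian Sylow $m$-subgroup whose nontrivial elements fix exactly the fundamental points, and conjugates $H$ into it. Your write-up is in fact slightly more careful than the paper's, since you explicitly verify that the $m$-part of $|\PGU(3,q)|$ equals that of $q-1$ and that the generator lands in Case (B2).
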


\begin{proof}
Let $\cH_q$ have equation \eqref{normatraccia}, and define
\begin{equation}\label{sylow2}
K=\{\diag(a^{q+1},a,1)\mid a\in\mathbb F_{q^2}^*\}.
\end{equation}
Then $K$ is an abelian Sylow $m$-subgroup of $\PGU(3,q)$, and the nontrivial elements of $K$ fix exactly the fundamental points. Up to conjugation, $H$ is contained in $K$ and the claim follows.
\end{proof}

%

\begin{lemma}\label{stabilizzatore}
Let $p\in\{2,3\}$. If $G$ has a nontrivial normal subgroup $H$ of prime order other than $p$, then $p^2\nmid|G|$.
\end{lemma}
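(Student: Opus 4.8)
The plan is to fix a generator $\sigma$ of $H$, of prime order $m\neq p$, to read its type off Lemma~\ref{classificazione}, and to exhibit in each case a small $G$-invariant set of points on which $G$ acts with $p$-part dividing $p$.

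Since $m$ is coprime to $p$, Lemma~\ref{classificazione} shows that $\sigma$ is of type (A) or of type (B); the same holds for every nontrivial element of $H$, which is a power of $\sigma$ and fixes the same points of $\PG(2,\bar{\F}_q)$ as $\sigma$. I would treat the cases (B1), (B2), (B3) together. There $\sigma$ fixes exactly the vertices $P_1,P_2,P_3$ of a non-degenerate triangle, and since the $P_i$ are eigenvectors of $\sigma$ they are fixed by every power of $\sigma$; hence the set $A$ of common fixed points of $H$ equals $\{P_1,P_2,P_3\}$. (Note that whether $\sigma$ is of type (A) or of type (B) is independent of the chosen generator, because powers of a homology are homologies and, conversely, if some generator $\sigma^k$ were a homology then so would be $\sigma$ itself, as $\langle\sigma^k\rangle=H$.) By Lemma~\ref{azione}, $G$ permutes $\{P_1,P_2,P_3\}$, giving a homomorphism $\rho\colon G\to\mathrm{Sym}\{P_1,P_2,P_3\}$ with $|\mathrm{Im}\,\rho|$ dividing $6$. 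Its kernel $N$ fixes each $P_i$; as the $P_i$ span $\PG(2,\bar{\F}_q)$, $N$ is conjugate in $\PGL(3,\bar{\F}_q)$ to a group of diagonal matrices, so every element of $N$ is semisimple and $\gcd(|N|,p)=1$. Therefore $|G|=|N|\,|\mathrm{Im}\,\rho|$ divides $6|N|$, and since $p\in\{2,3\}$ and $p^2\nmid 6$, I conclude $p^2\nmid|G|$.

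The remaining case, $\sigma$ of type (A), is the one I expect to be the main obstacle. Here $H$ consists of homologies with common center $P\notin\cH_q$ and axis the chord $\ell$ (the polar of $P$), and the common fixed locus of $H$ is $\{P\}\cup\ell$. As $P$ is the only point of this set off the line $\ell$, any collineation preserving $\{P\}\cup\ell$ must send $\ell$ to a line contained in it, hence to $\ell$, and so must fix $P$; thus by Lemma~\ref{azione} every element of $G$ fixes $P$, i.e. $G$ lies in the stabilizer of $P$ (equivalently of the chord $\ell$) and moreover acts on the $q+1$ points of $\ell\cap\cH_q$. The difficulty is that the $p$-part of the stabilizer of a point off $\cH_q$ equals $q=p^n$, so the mere containment of $G$ in it does not bound $|G|$; to finish one must use the additional information available in the situations where this lemma is applied, namely that $G$ fixes no point of $\PG(2,\bar{\F}_q)$ off $\cH_q$, which excludes type (A) altogether. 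Absent that, one would instead have to analyse a Sylow $p$-subgroup $S\leq G$ directly, using that $S$ is generated by elations with a common center $C_0\in\ell\cap\cH_q$ and acts semiregularly on $(\ell\cap\cH_q)\setminus\{C_0\}$, together with Sylow's counting theorem applied to the $G$-action on $\ell\cap\cH_q$.
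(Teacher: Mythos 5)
Your treatment of the case where $\sigma$ is of type (B) is correct and is essentially the paper's own argument (the paper bounds the index of the pointwise stabilizer of the triangle by $3$ via orbit--stabilizer; your version, with the kernel of $G\to\mathrm{Sym}\{P_1,P_2,P_3\}$ of index dividing $6$ and consisting of semisimple elements, is if anything slightly more careful). The genuine gap in your proposal is that the type-(A) case is not proved: you correctly reduce to $G$ fixing the centre $P\notin\cH_q$ and acting on the chord $\ell$, but then you only sketch two possible continuations (importing the extra hypothesis that $G$ fixes no point off $\cH_q$, which is not part of the lemma's statement, or a Sylow analysis on $\ell\cap\cH_q$), and neither is carried out. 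As a proof of the lemma as stated, the attempt is therefore incomplete.

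That said, your instinct that this is the hard case is exactly right, and in fact no completion along the lines you sketch is possible, because the statement fails in the type-(A) configuration; the paper's own one-line dismissal (``any $p$-element of $G$ fixes $P$ and acts on $\ell$; a contradiction by Lemma \ref{classificazione}'') is not a contradiction: an elation of type (C) whose centre $Q$ lies on $\ell\cap\cH_q$ fixes every point of the tangent $\ell_Q$ --- in particular the pole $P$ of $\ell$ --- and preserves $\ell$, since $\ell$ passes through its centre. Concretely, with $\cH_q$ in the form \eqref{normatraccia}, take $Q=(1,0,0)$, the elation group $E_Q=\{\tau_{1,0,c}\mid c^q+c=0\}$ of order $q$, and the homology $h=\diag(1,\lambda,1)$ with $\lambda$ of prime order $m\mid(q+1)$, so $h$ has centre $P=(0,1,0)\in\ell_Q$ and axis the chord $Y=0$ through $Q$. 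One checks that $h$ commutes with every $\tau_{1,0,c}$, so $G=E_Q\times\langle h\rangle$ has order $qm$, admits the normal subgroup $\langle h\rangle$ of prime order $m\ne p$, and satisfies $p^2\mid|G|$ whenever $q>p$ (e.g.\ $q=64$, $m=13$ gives $|G|=832$). So the type-(A) case cannot be closed at all without extra hypotheses; the most you can salvage from your reduction is that $G$ lies in the stabilizer of $P$, after which the applications of the lemma (the cases $|G|=52,104$, etc.) would have to be handled by a genus/ramification computation using the $q+1$ contribution of each homology in $H$, rather than by the lemma itself.
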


\begin{proof}
Assume by contradiction that $p^2\mid|G|$ and let $\sigma\in H$. By Lemma \ref{classificazione}, the type of $\sigma$ is either (A) or (B). Suppose that $\sigma$ is of type (A). Then, since $H=\langle \sigma\rangle$, all nontrivial elements of $H$ are of type (A) and they have the same center $P$ and axis $\ell$. On the other hand, by Lemma \ref{azione}, any $p$-element of $G$ fixes $P$ and acts on $\ell$; a contradiction by Lemma \ref{classificazione}. Suppose that $\sigma$ is of type (B). Then, since $H=\langle \sigma\rangle$, all nontrivial elements of $H$ are of type (B) and they fix the same triangle $T$. By Lemma \ref{azione}, $G$ preserves $T$. Hence, by the orbit-stabilizer theorem, the elements of $G$ fixing $T$ pointwise form a subgroup $M$ of index $1$, $2$, or $3$. In all cases, $M$ contains a $p$-element of type (A) or type (B), a contradiction by Lemma \ref{classificazione}.
\end{proof}
\vspace*{.2cm}

From Function field theory we need the Riemann-Hurwitz formula; see \cite[Theorem 3.4.13]{Sti}. Every subgroup $G$ of $\PGU(3,q)$ produces a quotient curve $\cH_q/G$, and the cover $\cH_q\rightarrow\cH_q/G$ is a Galois cover defined over $\mathbb{F}_{q^2}$  where the degree of the different divisor $\Delta$ is given by the Riemann-Hurwitz formula, namely
$\Delta=(2g(\cH_q)-2)-|G|(2g(\cH_q/G)-2)$. On the other hand, $\Delta=\sum_{\sigma\in G\setminus\{id\}}i(\sigma)$, where $i(\sigma)\geq0$ is given by the Hilbert's different formula \cite[Thm. 3.8.7]{Sti}, namely
\begin{equation}\label{contributo}
\textstyle{i(\sigma)=\sum_{P\in\cH_q(\bar\F_q)}v_P(\sigma(t)-t),}
\end{equation}
where $t$ is a local parameter at $P$.

By analyzing the geometric properties of the elements $\sigma \in \PGU(3,q)$, it turns out that there are only a few possibilities for $i(\sigma)$.
This is obtained as a corollary of Lemma \ref{classificazione} and stated in the following proposition.

\begin{theorem}\label{caratteri}
For a nontrivial element $\sigma\in \PGU(3,q)$ one of the following cases occurs.
\begin{enumerate}
\item If $\ord(\sigma)=2$ and $2\mid(q+1)$, then $\sigma$ is of type {\rm(A)} and $i(\sigma)=q+1$.
\item If $\ord(\sigma)=3$, $3 \mid(q+1)$ and $\sigma$ is of type {\rm(B3)}, then $i(\sigma)=3$.
\item If $\ord(\sigma)\ne 2$, $\ord(\sigma)\mid(q+1)$ and $\sigma$ is of type {\rm(A)}, then $i(\sigma)=q+1$.
\item If $\ord(\sigma)\ne 2$, $\ord(\sigma)\mid(q+1)$ and $\sigma$ is of type {\rm(B1)}, then $i(\sigma)=0$.
\item If $\ord(\sigma)\mid(q^2-1)$ and $\ord(\sigma)\nmid(q+1)$, then $\sigma$ is of type {\rm(B2)} and $i(\sigma)=2$.
\item If $\ord(\sigma)\ne3$ and $\ord(\sigma)\mid(q^2-q+1)$, then $\sigma$ is of type {\rm(B3)} and $i(\sigma)=3$.
\item If $p=2$ and $\ord(\sigma)=4$, then $\sigma$ is of type {\rm(D)} and $i(\sigma)=2$.
\item If $\ord(\sigma)=p$, $p \ne2$ and $\sigma$ is of type {\rm(D)}, then $i(\sigma)=2$.
\item If $\ord(\sigma)=p$ and $\sigma$ is of type {\rm(C)}, then $i(\sigma)=q+2$.
\item If $\ord(\sigma)\ne p$, $p\mid\ord(\sigma)$ and $\ord(\sigma)\ne4$, then $\sigma$ is of type {\rm(E)} and $i(\sigma)=1$.
\end{enumerate}
\end{theorem}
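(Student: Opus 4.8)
The plan is to derive $i(\sigma)$ as a corollary of Lemma~\ref{classificazione}, splitting the work into two independent pieces: identifying the points of $\cH_q$ fixed by $\sigma$, and computing $v_P(\sigma(t)-t)$ at each such point. The first observation is that in \eqref{contributo} the summand $v_P(\sigma(t)-t)$ is nonzero exactly when $\sigma(P)=P$, so
\[
i(\sigma)=\sum_{P}v_P(\sigma(t)-t),
\]
with $P$ ranging over the (finitely many) points of $\cH_q$ fixed by $\sigma$, each summand being independent of the chosen local parameter. Reading off the fixed loci from Lemma~\ref{classificazione}: a type (A) homology fixes exactly the $q+1$ points of the chord $\ell$, since its center lies off $\cH_q$; a type (B1) element fixes no point of $\cH_q$; a type (B2) element fixes the two triangle vertices lying on $\cH_q$; a type (B3) element fixes all three triangle vertices, which lie on $\cH_q$; and an element of type (C), (D) or (E) fixes exactly one point of $\cH_q$.

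For the tame elements — types (A), (B1), (B2), (B3), of order prime to $p$ — I claim each fixed point contributes exactly $1$. Indeed, at a fixed point $P$ the automorphism induced by $\sigma$ on the completed local ring $\widehat{\mathcal O}_P$ has finite order prime to $p$, hence is linearizable: there is a uniformizer $u$ at $P$ with $\sigma(u)=\zeta u$ for a root of unity $\zeta$, and $\zeta\neq1$ (otherwise $\sigma$ would be the identity on $\widehat{\mathcal O}_P$, hence on $\cH_q$), so $v_P(\sigma(t)-t)=v_P((\zeta-1)u)=1$. Combined with the counts above, this gives $i(\sigma)=q+1$ in cases (1) and (3), $i(\sigma)=0$ in case (4), $i(\sigma)=2$ in case (5), and $i(\sigma)=3$ in cases (2) and (6). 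For case (1) one first remarks that an involution with $2\mid q+1$ cannot be of type (B1): it is diagonalizable with eigenvalues in $\{1,-1\}$, hence, up to a scalar, a homology, so of type (A).

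For the wild elements — types (C), (D), (E) — I work at the unique fixed point on $\cH_q$, which, after conjugating in $\PGU(3,q)$ (an operation that does not change $i(\sigma)$), I take to be $P_\infty=(1:0:0)$ on the model \eqref{normatraccia}; there $v_{P_\infty}(x)=-(q+1)$ and $v_{P_\infty}(y)=-q$, so $t:=y/x$ is a uniformizer at $P_\infty$. For types (C) and (D) I may further assume that $\sigma=\tau_{1,b,c}$, as in \eqref{tau}, lies in the Sylow $p$-subgroup $S$ of $\PGU(3,q)$ fixing $P_\infty$, so that in the chart $T=1$ it acts by $(x,y)\mapsto(x+b^qy+c,\,y+b)$, being of type (C) when $b=0$ and of type (D) when $b\neq0$; then
\[
\sigma(t)-t=\frac{bx-b^qy^2-cy}{x\,(x+b^qy+c)}.
\]
When $b=0$ the numerator has valuation $-q$ and the denominator $-2q-2$, so $i(\sigma)=q+2$ (case (9)); when $b\neq0$ the term $-b^qy^2$ has the strictly least valuation $-2q$ in the numerator while $x^2$ dominates the denominator, so $i(\sigma)=2$ (cases (7) and (8)). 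For type (E) I decompose $\sigma=\sigma_p\sigma_{p'}$ into its $p$-part and prime-to-$p$ part, with $\sigma_{p'}\neq\mathrm{id}$ because $\ord\sigma\neq p$; the linear coefficient of $\sigma_p$ at $P_\infty$ is a $p$-th root of unity, hence equals $1$, so (linear coefficients being multiplicative) the linear coefficient of $\sigma$ equals that of $\sigma_{p'}$, which is a root of unity different from $1$ by the linearization argument. Thus $v_{P_\infty}(\sigma(t)-t)=1$, giving $i(\sigma)=1$ (case (10)).

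It remains to check against Lemma~\ref{classificazione} that cases (1)--(10) are pairwise disjoint and cover all nontrivial $\sigma$: the order of $\sigma$ forces its type except when $\ord\sigma$ divides $q+1$ — where (1)--(4) list the possibilities (A), (B1), and, for $\ord\sigma=3$, (B3) — and when $\ord\sigma=p$ is odd, where (8) and (9) separate types (C) and (D); the relevant divisibilities are $\gcd(q+1,q^2-q+1)\mid3$, $\gcd(q^2-1,q^2-q+1)\mid3$, and $3\mid q^2-q+1\iff3\mid q+1$. The step I expect to be the main technical point is the local computation for type (D): one must verify that $-b^qy^2$ is the unique term of smallest valuation in the numerator — which rests on $2q>q+1$ — and that $x^2$ is the unique such term in the denominator, so that no cancellation occurs and the valuations combine as claimed; the analogous, easier, computation handles type (C).
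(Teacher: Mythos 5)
Your proof is correct, and its skeleton is the same as the paper's: everything is reduced to the geometric classification of Lemma~\ref{classificazione}, the tame cases are handled by counting fixed points of $\sigma$ on $\cH_q$, the cases of order $p$ (or $4$ when $p=2$) by normalizing $\sigma$ to $\tau_{1,b,c}$ and separating $b=0$ from $b\neq0$, and type (E) is treated at its unique fixed point on $\cH_q$. The genuine difference is that where the paper cites three external results, you prove the corresponding local facts directly: for the statement that each tame fixed point contributes exactly $1$ the paper invokes \cite[Theorem 11.74]{HKT}, while you give the linearization argument on $\widehat{\mathcal O}_P$; for the fact that an involution with $2\mid(q+1)$ must be of type (A) the paper cites \cite[Lemma 2.2 (ii)]{KOS}, while you observe that an involution in odd characteristic is, up to scalar, $\diag(1,1,-1)$, hence a homology; and for $i(\tau_{1,b,c})=q+2$ or $2$ according as $b=0$ or $b\neq0$ the paper quotes \cite[Eq. (2.12)]{GSX}, while you compute $v_{P_\infty}(\sigma(t)-t)$ with $t=y/x$ explicitly --- your valuations are right, and the no-cancellation claim holds because $v(b^qy^2)=-2q$ is strictly smaller than $v(bx)=-(q+1)$ and $v(cy)=-q$, while $x$ strictly dominates $x+b^qy+c$. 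Your handling of type (E) via the cotangent character (trivial on the $p$-part of $\sigma$ since $\overline{\F}_q^*$ has no $p$-torsion, nontrivial on the prime-to-$p$ part by linearization) is a clean substitute for the paper's appeal to \cite[Theorem 11.74]{HKT} together with the structure of the point stabilizer. The self-contained route costs some local computation but removes the dependence on \cite{GSX} and \cite{KOS}; the paper's route is shorter given those references, and both rest on the same final disjointness check via $\gcd(q+1,q^2-q+1)\mid 3$.
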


\begin{proof}
Suppose $p\nmid\ord(\sigma)$. Then by \cite[Theorem 11.74]{HKT} $i(\sigma)$ equals the number of points of $\cH_q$ fixed by $\sigma$.
Also, for $q$ odd all involutions are conjugated and are of type (A), by \cite[Lemma 2.2 (ii)]{KOS}.
Therefore Cases (1) - (6) follow from Lemma \ref{classificazione}.

Suppose $\ord(\sigma)=p$, or $p=2$ and $\ord(\sigma)=4$.
As in the proof of Lemma \ref{classificazione}, we can assume that $\sigma$ has the form $\tau_{1,b,c}$ defined in \eqref{tau}. By direct computation, $\sigma$ is of type (C) or (D) if and only if $b=0$ or $b\ne0$, respectively.
By \cite[Eq. (2.12)]{GSX}, $b=0$ or $b\ne0$ if and only if $i(\sigma)=q+2$ or $i(\sigma)=2$, respectively. From this, Cases (8) and (9) follow.
Since $(p,\ord(\tau_{1,b,c}))=(2,4)$ implies $b\ne0$, Case (7) follows as well.

Suppose $p\mid\ord(\sigma)$, $\ord(\sigma)\ne p$, and $\ord(\sigma)\ne4$. By \cite[\S 2 p. 212]{M} and \cite[pp. 141-142]{H}, $\sigma$ is of type (E). Let $P\in\cH_q$ be the unique fixed point of $\sigma$ on $\cH_q$. By \cite[Theorem 11.74]{HKT}, $\sigma$ is in the stabilizer of $P$ but is not a $p$-element. Hence $i(\sigma)=1$.
Since Cases (A) - (E) in Lemma \ref{classificazione} cover all nontrivial elements of $\PGU(3,q)$, Cases (1) - (10) give a complete classification.
\end{proof}
\vspace*{.2cm}

Theorem \ref{caratteri} extends \cite[Lemma 4.1]{DM}, where the result is for $\sigma$ fixing an $\fqs$-rational point of $\cH_q$.

Groups fixing an $\fqs$-rational point of $\cH_q$ are investigated in \cite{GSX}.

\begin{theorem}\label{fissa2}
{\rm \cite[Thm. 3.3 and Eq. (2.12)]{GSX}}
Let $p=2$. For a positive integer $g$, the following assertions are equivalent.
\begin{enumerate}
\item There exists a $2$-subgroup $G\leq\PGU(3,q)$ such that $g=g(\cH_q/G)$.
\item $g=2^{n-v-1}(2^{n-w}-1)$ with $0\le v\le n-1$ and $0\le w\le n-1$, and there exist additive subgroups $V\subseteq\fqs$ and $W\subseteq\F_q$ of order $\ord(V)=2^v$ and $\ord(W)=2^w$, such that $V^{q+1}=\{b^{q+1}\mid b\in V\}$ is contained in $W$.
\end{enumerate}
\end{theorem}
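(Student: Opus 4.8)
The plan is to pass to a Sylow $2$-subgroup of $\PGU(3,q)$, parametrise its subgroups by pairs of additive groups, and read off the genus of the quotient from the ramification over a single point, using Theorem~\ref{caratteri} and the Riemann--Hurwitz formula in the form $\Delta=(2g(\cH_q)-2)-|G|(2g(\cH_q/G)-2)$ with $\Delta=\sum_{\sigma\neq \mathrm{id}}i(\sigma)$. Since $q=2^n$ and $|\PGU(3,q)|=(q^3+1)q^3(q^2-1)$ with $q^3+1$ and $q^2-1$ odd, a Sylow $2$-subgroup has order $q^3$; any $2$-subgroup is conjugate into it, and conjugate subgroups produce quotient curves of the same genus. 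Using the norm-trace model \eqref{normatraccia} we may take this Sylow subgroup to be $S=\{\tau_{1,b,c}\mid b,c\in\fqs,\ b^{q+1}=c^q+c\}$, with $\tau_{1,b,c}$ as in \eqref{tau}. A short matrix computation gives $\tau_{1,b,c}\tau_{1,b',c'}=\tau_{1,\,b+b',\,c+c'+b^qb'}$ and $\tau_{1,b,c}^2=\tau_{1,\,0,\,b^{q+1}}$, so $Z(S)=\{\tau_{1,0,c}\mid c\in\fq\}\cong(\fq,+)$ consists of the identity together with the elations (type (C)), while each $\tau_{1,b,c}$ with $b\neq0$ has order $4$ and is of type (D). By Lemma~\ref{classificazione} every nontrivial element of $S$ fixes $P_\infty=(1,0,0)\in\cH_q$ and no further point of $\cH_q$; hence for $G\leq S$ the cover $\cH_q\to\cH_q/G$ is totally ramified over $P_\infty$ and unramified elsewhere.

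To any $G\leq S$ I would attach $V=\{b\in\fqs\mid\tau_{1,b,c}\in G\ \text{for some }c\}$ and $W=\{c\in\fq\mid\tau_{1,0,c}\in G\}$; by the multiplication law these are additive subgroups of $\fqs$ and $\fq$ respectively, $W$ is the kernel and $V$ the image of $G\to S/Z(S)\cong(\fqs,+)$ (so $|G|=|V|\,|W|$), and $\tau_{1,b,c}^2=\tau_{1,0,b^{q+1}}\in G$ gives $V^{q+1}\subseteq W$. Conversely, given additive subgroups $V\subseteq\fqs$ and $W\subseteq\fq$ with $V^{q+1}\subseteq W$, I would fix an $\F_2$-basis $\beta_1,\dots,\beta_v$ of $V$, choose $c_i$ with $c_i^q+c_i=\beta_i^{q+1}$ (possible because $c\mapsto c^q+c$ maps $\fqs$ onto $\fq$), and set $G=\la\tau_{1,0,w_1},\dots,\tau_{1,0,w_w},\tau_{1,\beta_1,c_1},\dots,\tau_{1,\beta_v,c_v}\ra$ for an $\F_2$-basis $w_1,\dots,w_w$ of $W$. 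The identity $b^qb'+b'^qb=(b+b')^{q+1}+b^{q+1}+b'^{q+1}$ shows that every commutator $[\tau_{1,\beta_i,c_i},\tau_{1,\beta_j,c_j}]=\tau_{1,0,\,\beta_i^q\beta_j+\beta_j^q\beta_i}$ and every square $\tau_{1,\beta_i,c_i}^2=\tau_{1,0,\beta_i^{q+1}}$ lies in $\{\tau_{1,0,w}\mid w\in W\}$; since $S$ is nilpotent of class $2$, a collecting argument identifies $G$ with the set of the $2^{v+w}$ products $\prod_j\tau_{1,0,w_j}^{\varepsilon_j}\prod_i\tau_{1,\beta_i,c_i}^{\delta_i}$, so $G$ is a subgroup with $|G|=|V|\,|W|$ realising $(V,W)$. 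I expect this realisation step — that every admissible pair $(V,W)$ comes from an honest subgroup, of the expected order — to be the only genuinely delicate point; it may also be quoted from \cite{GSX}.

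To finish, let $G\leq S$ correspond to $(V,W)$ with $|V|=2^v$, $|W|=2^w$. By Theorem~\ref{caratteri}, the $|W|-1$ nontrivial elements of $G$ with $b=0$ are elations with $i(\sigma)=q+2$, and the remaining $|G|-|W|$ elements have order $4$ with $i(\sigma)=2$; since all ramification lies over $P_\infty$,
\begin{equation*}
\Delta=(|W|-1)(q+2)+2(|G|-|W|)=2|G|+q|W|-q-2 .
\end{equation*}
Substituting into $\Delta=(2g(\cH_q)-2)-|G|(2g(\cH_q/G)-2)$, using $2g(\cH_q)-2=q(q-1)-2$ and simplifying, one gets $2|G|\,g(\cH_q/G)=q(q-|W|)$, that is
\begin{equation*}
g(\cH_q/G)=\frac{q(q-|W|)}{2|V|\,|W|}=2^{\,n-v-1}\bigl(2^{\,n-w}-1\bigr).
\end{equation*}
Together with the correspondence this yields both implications: a $2$-subgroup gives, after conjugation into $S$, a pair $(V,W)$ and a genus of the displayed form, and conversely each admissible $(V,W)$ gives a subgroup whose quotient has that genus. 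For the ranges of the parameters: $|W|=q$ forces $g(\cH_q/G)=0$, so a positive genus forces $w\leq n-1$, and then $2^{n-w}-1$ is odd, so integrality of $g(\cH_q/G)$ forces $n-v-1\geq0$, i.e. $v\leq n-1$ (equivalently, $2|V|\,|W|\mid q(q-|W|)=2^{\,n+w}(2^{\,n-w}-1)$ with $2^{n-w}-1$ odd forces $1+v+w\leq n+w$). Hence the positive integers occurring as $g(\cH_q/G)$ for a $2$-subgroup $G\leq\PGU(3,q)$ are precisely those of the stated form.
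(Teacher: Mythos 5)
Your proposal is correct, and it follows essentially the same route as the source: the paper itself gives no proof of this statement (it is quoted verbatim from \cite[Thm.~3.3 and Eq.~(2.12)]{GSX}), but the parametrization you use — conjugating into the Sylow $2$-subgroup $S=\{\tau_{1,b,c}\}$, attaching to $G$ the additive groups $V=\{b:\tau_{1,b,c}\in G\}$ and $W=\{c:\tau_{1,0,c}\in G\}$ with $V^{q+1}\subseteq W$, counting the $2^w-1$ elations with $i(\sigma)=q+2$ against the order-$4$ elements with $i(\sigma)=2$, and applying Riemann--Hurwitz — is exactly the framework of \cite{GSX} that the paper recalls in the remark immediately following the theorem. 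Your verification of the realisability of an arbitrary admissible pair $(V,W)$ via the class-$2$ collecting argument, and the derivation of the bounds $v,w\le n-1$ from positivity and integrality of the genus, are both sound.
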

Assume that assertions {\rm (1)} and {\rm (2)} hold, and let $\cH_q$ have equation \eqref{normatraccia}. Up to conjugation the unique point of $\cH_q$ fixed by every element of $G$ is $P_\infty=(1,0,0)$, and the elements of $G$ have the form \eqref{tau}.
Then $|G|=2^{v+w}$ and the additive subgroups $\{b\in\fqs\mid\tau_{1,b,c}\in G\}\leq\fqs$ and $\{c\in\fqs\mid\tau_{1,0,c}\in G\}\leq\fq$ have order $2^v$ and $2^w$, respectively. In particular, the number of involutions of $G$ equals $2^w-1$.

\begin{theorem}\label{fissa}
{\rm \cite[Thm. 4.4 and Eq. (2.12)]{GSX}}
Let $G$ fix an $\fqs$-rational point $P\in\cH_q$, and let $|G|=m\cdot p^u$ with $m>1$, $m$ coprime with $p$. Then $\cH_q/G$ has genus
$$ g(\cH_q/G)=\frac{\left(q-p^w\right)\left(q-(\gcd(m,q+1)-1)p^v\right)}{2mp^u}\,, $$
where $v,w$ are non-negative integers such that $v+w=u$.
\end{theorem}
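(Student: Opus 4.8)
The plan is to factor the Galois cover $\cH_q\to\cH_q/G$ through $\cX_1:=\cH_q/G_1$, where $G_1$ is the normal Sylow $p$-subgroup of $G$, and then through the tame cyclic quotient by $\overline G:=G/G_1$, applying Riemann--Hurwitz and Theorem \ref{caratteri} at each stage. First I would recall the structure of the stabiliser $\mathcal N$ of $P$ in $\PGU(3,q)$ (Theorem \ref{Mit}(i)): with $\cH_q$ in the norm-trace model \eqref{normatraccia} and $P=P_\infty=(1,0,0)$, one has $\mathcal N=S\rtimes C$, where $S=\{\tau_{1,b,c}\}$ is the normal Sylow $p$-subgroup of order $q^3$, with centre $Z(S)=\{\tau_{1,0,c}\}$ of order $q$ and $[S,S]\leq Z(S)$, and $C=\{\diag(a^{q+1},a,1):a\in\fqs^*\}$ is cyclic of order $q^2-1$. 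Since $G\leq\mathcal N$, the subgroup $G_1:=G\cap S$ is the unique Sylow $p$-subgroup of $G$, of order $p^u$, and $\overline G$ is cyclic of order $m\mid q^2-1$; by Schur--Zassenhaus $\overline G$ is isomorphic to a complement, which after conjugating by an element of $S$ we may assume lies in $C$ (this alters neither $u$, nor $v$, nor $w$, nor the genus, since $S$ acts trivially on $Z(S)$ and on $S/Z(S)$). Here $W:=\{c:\tau_{1,0,c}\in G_1\}=G_1\cap Z(S)$ has order $p^w$ and $G_1/(G_1\cap Z(S))\hookrightarrow S/Z(S)$ has order $p^v$, with $v+w=u$. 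I would also note that no element of $G$ is of type {\rm(B1)} or {\rm(B3)}, since every element of $G$ fixes $P\in\cH_q$ whereas these types fix no $\fqs$-rational point of $\cH_q$.

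Step one: the genus of $\cX_1$. By Theorem \ref{caratteri} the nontrivial elements of $G_1$ split into the $p^w-1$ elements of $W\setminus\{1\}$, of type {\rm(C)} with $i(\sigma)=q+2$, and the remaining $p^u-p^w$ elements, of type {\rm(D)} with $i(\sigma)=2$ (for $p=2$ these are exactly the order-$4$ elements, since $\tau_{1,b,c}^2=\tau_{1,0,b^{q+1}}$). Riemann--Hurwitz for $\cH_q\to\cX_1$ then yields $g(\cX_1)=q(q-p^w)/(2p^u)$; for $p=2$ this is also Theorem \ref{fissa2}.

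Step two: the tame action of $\overline G$ on $\cX_1$. The cover $\cX_1\to\cH_q/G$ has degree $m$ coprime to $p$, so a nontrivial $\bar\sigma\in\overline G$ of order $e$ contributes to the different a term equal to the number of fixed points of $\bar\sigma$ on $\cX_1$; and this number equals the number of $G_1$-orbits on $\cH_q$ meeting $\mathrm{Fix}_{\cH_q}(\sigma)$ for any $p'$-lift $\sigma$ of $\bar\sigma$, because a $G_1$-orbit $O$ satisfies $\sigma(O)=O$ if and only if $O$ meets the fixed locus of a $G_1$-conjugate of $\sigma$ (this one checks using the normality of $G_1$ and Schur--Zassenhaus inside $\langle G_1,\sigma\rangle$). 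Taking $\sigma=\diag(a^{q+1},a,1)$ with $\ord(a)=e$, Lemma \ref{classificazione} and the previous paragraph leave two cases: if $e\mid q+1$, then $\sigma=\diag(1,a,1)$ is a homology of type {\rm(A)} with axis $Y=0$, whose fixed locus on $\cH_q$ consists of $P_\infty$ and the $q$ points $(x,0,1)$ with $x^q+x=0$, and $W$ permutes these $q$ points by translation in $q/p^w$ orbits, so $\bar\sigma$ has $1+q/p^w$ fixed points on $\cX_1$; if $e\nmid q+1$, then $\sigma$ is of type {\rm(B2)}, fixing on $\cH_q$ only $P_\infty$ and $P_0=(0,0,1)$, which lie in distinct $G_1$-orbits, so $\bar\sigma$ has $2$ fixed points on $\cX_1$. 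Since the elements of $\overline G$ of order dividing $q+1$ form the subgroup of order $m_1:=\gcd(m,q+1)$, the different of $\cX_1\to\cH_q/G$ has degree $(m_1-1)(1+q/p^w)+2(m-m_1)$.

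Finally, inserting $g(\cX_1)=q(q-p^w)/(2p^u)$ and this degree into the Riemann--Hurwitz relation $2g(\cX_1)-2=m(2g(\cH_q/G)-2)+\deg(\mathrm{Diff})$ and simplifying with $p^v=p^u/p^w$ gives precisely $g(\cH_q/G)=(q-p^w)(q-(m_1-1)p^v)/(2mp^u)$. I expect the main obstacle to be Step two, specifically the fixed-point count of $\bar\sigma$ on the quotient $\cX_1$: one must carefully determine which $G_1$-orbits descend to fixed points of $\bar\sigma$ (including $\sigma$-stable orbits on which $\sigma$ itself has no fixed point) and verify that no fixed points other than those produced by $\mathrm{Fix}_{\cH_q}(\sigma)$ occur. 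The explicit matrices in the norm-trace model, together with the type classification of Lemma \ref{classificazione}, are what keep this bookkeeping under control.
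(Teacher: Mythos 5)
Your proposal is correct, but note that the paper does not actually prove Theorem \ref{fissa}: it is imported verbatim from \cite[Thm. 4.4 and Eq. (2.12)]{GSX}, so there is no in-paper proof to compare against. Your reconstruction --- factoring $\cH_q\to\cH_q/G$ through $\cX_1=\cH_q/G_1$ with $G_1=G\cap S$ the normal Sylow $p$-subgroup, computing $g(\cX_1)=q(q-p^w)/(2p^u)$ from the $p^w-1$ elements of type (C) and the $p^u-p^w$ elements of type (D), and then handling the tame cyclic quotient by counting fixed points on $\cX_1$ --- checks out. I verified that each of the $m_1-1$ elements of type (A) yields $1+q/p^w$ fixed points on $\cX_1$ (the $q$ affine points of the axis on $\cH_q$ fall into $q/p^w$ classes under translation by $W$, plus the singleton orbit $\{P_\infty\}$), that each of the $m-m_1$ elements of type (B2) yields $2$, and that Riemann--Hurwitz then reproduces exactly the stated formula. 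The two delicate steps --- conjugating the Schur--Zassenhaus complement into the torus $C$ without altering $v$ and $w$ (which works because $Z(S)$ is normal in $S$), and the identification of the $\bar\sigma$-fixed points of $\cX_1$ with the $G_1$-orbits meeting $\mathrm{Fix}_{\cH_q}(\sigma)$ via conjugacy of complements of $G_1$ in $\langle G_1,\sigma\rangle$ --- are both handled correctly. This is essentially the strategy of the original proof in \cite{GSX}, which works with explicit function-field generators and the ramification filtration at $P_\infty$; your version recasts it in the geometric language of Lemma \ref{classificazione} and Theorem \ref{caratteri}, consistently with how the rest of the present paper operates, and in particular recovers Theorem \ref{fissa2} as the special case $m=1$, $p=2$ of your Step one.
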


Assume that $G$ satisfies the hypotheses of Theorem \ref{fissa} and let $\cH_q$ have equation \eqref{normatraccia}. Up to conjugation $P=(1,0,0)$ and the elements of $G$ have the form
$$ \tau_{a,b,c}=\begin{pmatrix} a^{q+1} & b^q & c \\ 0 & a & b \\ 0 & 0 & 1\end{pmatrix}, $$
with $a,b,c\in\fqs$, $a\ne0$, $b^{q+1}=c^q+c$.
Then the additive subgroups $\{b\in\fqs\mid\tau_{1,b,c}\in G\}$ and $\{c\in\fqs\mid\tau_{1,0,c}\in G\}$ of $\fqs$ have order $p^v$ and $p^w$, respectively.
 In particular, the number of nontrivial elements $\sigma\in G$ with $i(\sigma)=q+2$ equals $p^w-1$.

As a consequence of Theorem \ref{caratteri}, the following result is obtained.

\begin{proposition}
The Suzuki curve $\cS_2$ is a Galois subcover of the Hermitian curve $\cH_4$.
\end{proposition}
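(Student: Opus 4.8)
The plan is to exhibit an explicit subgroup $G\leq\PGU(3,4)$, of order $4$, for which $\cH_4/G$ is $\F_{16}$-isomorphic to $\cS_2$.

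Take $\cH_4$ in the norm-trace model \eqref{normatraccia} with $q=4$, so that $\PGU(3,4)=\aut(\cH_4)$ contains the Sylow $2$-subgroup $\{\tau_{1,b,c}\mid b^{5}=c^{4}+c\}$ described in \eqref{tau}. Pick any $\beta\in\F_{16}^{*}$; then $\beta^{5}\in\F_{4}^{*}$, so there is $c_0\in\F_{16}$ with $c_0^{4}+c_0=\beta^{5}$ (the trace $\F_{16}\to\F_4$ being onto). Set $\psi=\tau_{1,\beta,c_0}$ and $G=\langle\psi\rangle$. A direct matrix computation gives $\psi^{2}=\tau_{1,0,\beta^{5}}\neq\mathrm{id}$ and $\psi^{4}=\mathrm{id}$, so $G$ is cyclic of order $4$; moreover every element of $G$ fixes the $\F_{16}$-rational point $P_\infty=(1,0,0)\in\cH_4$, hence $\cH_4\to\cH_4/G$ is an $\F_{16}$-rational Galois cover.

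Next I would compute $\Delta=\sum_{\sigma\in G\setminus\{\mathrm{id}\}} i(\sigma)$ via Theorem \ref{caratteri}. The element $\psi^{2}=\tau_{1,0,\beta^{5}}$ has order $2=p$ and vanishing $b$-part, hence is an elation, i.e. of type {\rm(C)}, so $i(\psi^{2})=q+2=6$ by Theorem \ref{caratteri}(9). Each of $\psi$ and $\psi^{3}$ has order $4$, so by Theorem \ref{caratteri}(7) it is of type {\rm(D)} with $i(\psi)=i(\psi^{3})=2$. Hence $\Delta=6+2+2=10$, and the Riemann--Hurwitz formula $2g(\cH_4)-2=|G|\bigl(2g(\cH_4/G)-2\bigr)+\Delta$ becomes $10=4\bigl(2g(\cH_4/G)-2\bigr)+10$, so $g(\cH_4/G)=1$. (Equivalently, this is the case $v=w=1$ of Theorem \ref{fissa2}.)

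Finally I would identify the quotient. By \cite[Prop.~6]{L} the curve $\cH_4/G$ is $\F_{16}$-maximal, hence, being of genus $1$, it is an elliptic curve over $\F_{16}$ with exactly $25$ rational points, i.e. with Frobenius acting as $-4$; the same holds for $\cS_2$. I expect the delicate point to be precisely this last identification: Riemann--Hurwitz only yields that $\cH_4/G$ is \emph{a} genus-$1$ $\F_{16}$-maximal curve, and one must still check that it is $\cS_2$ rather than some other elliptic curve. I would settle this either by invoking the uniqueness, up to $\F_{q^2}$-isomorphism, of the genus-$1$ $\F_{q^2}$-maximal curve (recall $j=0$ is forced), or, more self-containedly, by writing down a pair of $\psi$-invariant rational functions on $\cH_4$ generating the fixed field $\overline{\F_{16}(\cH_4)}^{\,G}$ and verifying that the resulting plane cubic is $\F_{16}$-projectively equivalent to $Y^{2}+Y=X^{3}+X^{2}$. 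This is where the bulk of the (routine but not entirely painless) computation lies.
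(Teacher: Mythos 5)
Your proposal is correct and takes essentially the same approach as the paper: choose a cyclic subgroup $G\leq\PGU(3,4)$ of order $4$, use Theorem~\ref{caratteri} (equivalently Theorem~\ref{fissa2}) to see that $\cH_4/G$ is an $\F_{16}$-maximal curve of genus $1$, and conclude by the uniqueness up to $\F_{16}$-isomorphism of the $\F_{16}$-maximal elliptic curve, which is exactly how the paper settles the final identification you flag as delicate (it cites \cite[Thm.~77]{Hurt}). Your explicit construction of $G$ and the ramification computation are correct, just more detailed than the paper's one-line version.
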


\begin{proof}
The Suzuki curve $\cS_2$ has genus $1$ and is $\F_{16}$-maximal.
Let $G\leq\PGU(3,4)$ be a cyclic group of order $4$. By Theorem \ref{caratteri}, the $\F_{16}$-maximal quotient curve $\cH_4/G$ is elliptic.
By \cite[Thm. 77]{Hurt}, there is only one $\F_{16}$-isomorphism class of $\F_{16}$-maximal elliptic curves. Then $\cS_2$ is $\F_{16}$-isomorphic to $\cH_4/G$.
\end{proof}
\vspace*{.2cm}

Throughout the rest of the paper, $C_r$ stands for a cyclic group of order $r$, $S_m$ is a Sylow $m$-subgroup of $G$, and $n_m$ is the number of Sylow $m$-subgroups of $G$.

%

\section{Proof of Theorem \ref{teoremaS8}}\label{sezioneS8}

By absurd, let $G\leq\PGU(3,64)$ be such that $\cS_8\cong\cH_{64}/G$. The order of $\PGU(3,64)$ is equal to $2^{18}\cdot3^2\cdot5^2\cdot7\cdot13^2\cdot37\cdot109$. From the Riemann-Hurwitz formula,
$$ 44<\frac{|\cH_{64}(\F_{8^4})|}{|\cS_8(\F_{8^4})|}\leq|G|\leq\frac{2g(\cH_{64})-2}{2g(\cS_8)-2}\leq 155. $$
Since $|G|$ divides $|\PGU(3,64)|$,
$$|G| \in \{45,48,50,52,56,60,63,64,65,70,72,74,75,78,80,84,90,91,96,$$
$$ \hspace{1.8 cm} 100,104,105,109,111,112,117,120,126,128,130,140,144,148,150\}.$$
The different divisor has degree
\begin{equation}\label{diffS8}
\Delta=(2g(\cH_{64})-2)-|G|(2g(\cS_8)-2) = 4030-26\cdot|G|.
\end{equation}

{\bf Case $|G|=45$.} By Sylow's Third Theorem \cite[Thm. 6.10]{Ro} and Schur-Zassenhaus Theorem \cite[Thm. 9.19]{Ro}, $G$ is the direct product $G=S_3\times C_5$. Then $G$ has $4$ elements of order $5$ and $40$ elements of odd order multiple of $3$. By Theorem \ref{caratteri}, $\Delta\leq 4\cdot65 + 40\cdot2$, contradicting \eqref{diffS8}.

{\bf Case $|G|=48$.} Any group of order $48$ has a normal subgroup of order $8$ or $16$ (see \cite[p. 154 Ex. 10]{Ma}); hence $G$ has a normal $2$-subgroup $N$. By \cite[Theorem 11.74]{HKT}, $N$ has a unique fixed point $P$ on $\cH_{64}$, which is $\F_{64^2}$-rational. By Lemma \ref{azione}, $G$ fixes $P$. From Theorem \ref{fissa},
$$ 14 = \frac{(64-2^w)(64-(\gcd(3,65)-1)2^v)}{2\cdot48}\,, $$
with $v+w=4$. By direct computation, this is not possible.

{\bf Case $|G|=50$.} By Sylow's Third Theorem and Schur-Zassenhaus Theorem, $G$ is a semidirect product $G=S_{5}\rtimes C_2$.
By Theorem \ref{caratteri}, $\Delta=i\cdot65+(24-i)\cdot0+n_2\cdot66+(25-n_2)\cdot1$ with $0\leq i\leq24$ and $n_2\in\{1,5,25\}$. This contradicts \eqref{diffS8}.

{\bf Case $|G|=52$.} By Sylow's Third Theorem, $n_{13}=1$. This contradicts Lemma \ref{stabilizzatore}.

{\bf Case $|G|=56$.} By Sylow's Third Theorem, $n_2=1$ or $n_7=1$. Suppose that $n_2=1$, so that $G=S_2\rtimes C_7$. Then $S_2$ fixes an $\F_{64^2}$-rational point $P\in\cH_{64}$, and $G$ fixes $P$ by Lemma \ref{azione}. By Theorem \ref{fissa},
$$ 14=\frac{(64-2^w)(64-(\gcd(7,65)-1)2^v)}{2\cdot56}\,, $$
with $v+w=2$; this is a contradiction. The case $n_7=1$ is impossible by Lemma \ref{stabilizzatore}.

{\bf Case $|G|=60$.} By \cite[Problem 6.16]{Ro}, either $n_5=1$ or $G$ is isomorphic to the alternating group $Alt(5)$. The case $n_5=1$ is impossible by Lemma \ref{stabilizzatore}; hence $G\cong Alt(5)$. By Theorem \ref{caratteri}, $\Delta=15\cdot66+20\cdot2+i\cdot65+(24-i)\cdot0$, with $0\leq i\leq24$. This contradicts \eqref{diffS8}.

{\bf Case $|G|=63$.} By Theorem \ref{caratteri}, $\Delta=62\cdot2$, contradicting \eqref{diffS8}.

{\bf Case $|G|=64$.} By Theorem \ref{fissa2}, $14 = 2^{6-v-1}(2^{6-w}-1)$ with $0\leq v,w \leq5$. Hence, $v=4$ and $w=3$. Then, by Theorem \ref{fissa2} and Lemma \ref{classificazione}, $G$ has $7$ elements of type (C) and $56$ elements of type (D). By Theorem \ref{caratteri}, $\Delta=7\cdot66+56\cdot2$. This contradicts \eqref{diffS8}.

{\bf Case $|G|=65$.} 
By Lemma \ref{classificazione}, any nontrivial element $\sigma\in G$ is either of type (A) or of type (B1).
If a generator of the cyclic group $G$ is of type (A), then any element is of type (A) and $\Delta=64\cdot65$ by Theorem \ref{caratteri}, contradicting \eqref{diffS8}. If the $48$ generators of $G$ are of type (B1), then $\Delta\leq16\cdot65$ by Theorem \ref{caratteri}. This contradicts \eqref{diffS8}.

{\bf Case $|G|=70$.} By Sylow's Third Theorem, $n_5=n_7=1$ and $n_2\in\{1,5,7,35\}$; hence, $G=C_{35}\rtimes C_2$.
By Theorem \ref{caratteri}, $\Delta=n_2\cdot66+(35-n_2)\cdot1+30\cdot2+i\cdot65+(4-i)\cdot0$ with $0\leq i\leq4$. This contradicts \eqref{diffS8}.

{\bf Case $|G|=72$.} By \cite[Theorem 1]{Mon}, $G$ has a characteristic $3$-subgroup $N$. By Lemma \ref{pgruppo2}, the elements of $N$ are of type (B2) with a common fixed triangle $T$. By Lemma \ref{azione}, $G$ acts on $T$. By the orbit-stabilizer theorem, $G$ contains a $2$-element fixing $T$ pointwise, contradicting Lemma \ref{classificazione}.

{\bf Case $|G|=74$.} For any prime power $q$, $\PSU(3,q)$ has index $\gcd(3,q+1)$ in $\PGU(3,q)$. This implies that, for any maximal subgroup $M\ne\PSU(3,q)$ of $\PGU(3,q)$, $|M|$ divides three times the order of a maximal subgroup of $\PSU(3,q)$. By Theorem \ref{Mit}, $74$ does not divide three times the order of any maximal subgroup of $\PSU(3,64)$, a contradiction.

{\bf Case $|G|=75$.} By Sylow and Schur-Zassenhaus theorems, $G$ is a semidirect product $G=S_{5}\rtimes C_3$. By Theorem \ref{caratteri}, $\Delta=i\cdot65+(24-i)\cdot0+j\cdot2+(50-j)\cdot3$ with $0\leq i\leq24$ and $0\leq j\leq50$. This contradict \eqref{diffS8}.

{\bf Case $|G|=78$.} By Sylow's Third Theorem, $n_{13}=1$; by Lemma \ref{azione}, $G$ acts on the fixed points of $S_{13}$. Every nontrivial element $\sigma\in S_{13}$ generates $S_{13}$ and is either of type (A) or (B1). Hence, all nontrivial elements of $G$ either are of type (A), or act on a common triangle $T$. In the former case, $G$ contains a $2$-element of type (A), contradicting Lemma \ref{classificazione}. In the latter case, by the orbit-stabilizer theorem, the subgroup $H$ of $G$ fixing $T$ pointwise contains a $2$-element or a $3$-element. This contradicts Lemma \ref{classificazione}.

{\bf Case $|G|=80$.} By \cite[Theorem 1]{Mon}, $G$ has a characteristic $2$-subgroup $N$. By Lemma \ref{azione}, $G$ fixes the unique fixed point of $N$ on $\cH_{64}$, which is $\F_{64^2}$-rational. By Theorem \ref{fissa},
$$ 14=\frac{(64-2^w)(64-(\gcd(5,65)-1)2^v)}{2\cdot80}\,, $$
with $v+w=4$, which is impossible.

{\bf Case $|G|=84$.} By Sylow's Third Theorem, $n_7=1$. This contradicts Lemma \ref{stabilizzatore}.

{\bf Case $|G|=90$.} Since $|G|\equiv2\pmod4$, $G$ has a normal subgroup $N$ of index $2$ (see \cite[Ex. 4.3]{Ox}).
By Sylow's Third Theorem, $N$ has a characteristic $5$-subgroup $C_5$, so that $C_5$ is normal in $G$ and $n_5=1$. Also, $n_3=1$. Then $G$ is a semidirect product $G=C_5\times S_3\rtimes C_2$. By Theorem \ref{caratteri}, $\Delta=4\cdot i+40\cdot2+n_2\cdot66+(45-n_2)\cdot1$, with $i\in\{0,65\}$ and $1<n_2\mid45$. This contradicts \eqref{diffS8}.

{\bf Case $|G|=91$.} By Theorem \ref{caratteri}, $\Delta=78\cdot2+12\cdot i$ with $i\in\{0,65\}$, contradicting \eqref{diffS8}.

{\bf Case $|G|=96$.} By \cite[Theorem 1]{Mon}, $G$ has a characteristic $2$-subgroup $N$. By Lemma \ref{azione}, $G$ fixes the unique fixed point of $N$ on $\cH_{64}$, which is $\F_{64^2}$-rational. By Theorem \ref{fissa},
$$ 14=\frac{(64-2^w)(64-(\gcd(3,65)-1)2^v)}{2\cdot91}\,, $$
with $v+w=5$, which is impossible.

{\bf Case $|G|=100$.} By Sylow's Third Theorem, $n_5=1$. By Lemma \ref{pgruppo1}, the fixed points of $S_{5}$ are the vertices of a triangle $T$. By Lemma \ref{azione}, $G$ acts on $T$. By the orbit-stabilizer theorem, $G$ contains a $2$-element fixing $T$ pointwise. This contradicts Lemma \ref{classificazione}.

{\bf Case $|G|=104$.} By Sylow's Third Theorem, $n_{13}=1$. This contradicts Lemma \ref{stabilizzatore}.

{\bf Case $|G|=105$.} By Sylow's Third Theorem, $n_5\in\{1,21\}$.
All elements of a $S_5$ are of the same type, either (A) or (B1).
Then, by Theorem \ref{caratteri}, $\Delta=4i\cdot65+4(n_5-i)\cdot0+(104-4n_5)\cdot2$, with $0\leq i\leq n_5$. This contradicts \eqref{diffS8}.

{\bf Case $|G|=109$.} By Theorem \ref{caratteri}, $\Delta=108\cdot3$. This contradicts \eqref{diffS8}.

{\bf Case $|G|=111$.} By Sylow and Schur-Zassenhaus theorems, $n_{37}=1$, $n_3\in\{1,37\}$, and $G$ is a semidirect product $G=C_{37}\rtimes C_3$.
By Lemma \ref{classificazione}, $G$ has no elements of order $37\cdot3$. Hence, $n_3=37$. By Theorem \ref{caratteri}, $\Delta=36\cdot3+74\cdot2$. This contradicts \eqref{diffS8}.

{\bf Case $|G|=112$.} By \cite[Theorem 1]{Mon}, $G$ has a characteristic $2$-subgroup $N$. By Lemma \ref{azione}, $G$ fixes the unique fixed point of $N$ on $\cH_{64}$, which is $\F_{64^2}$-rational. By Theorem \ref{fissa},
$$ 14 = \frac{(64-2^w)(64-(\gcd(7,65)-1)2^v)}{2\cdot112}\,, $$
with $v+w=4$, which is a contradiction.

{\bf Case $|G|=117$.} By Sylow and Schur-Zassenhaus theorems, $G$ is a semidirect product $G=C_{13}\rtimes S_3$. Since $13$ is prime, the nontrivial elements of $C_{13}$ are of the same type (A) or (B1). By Theorem \ref{caratteri}, $\Delta=12\cdot i+104\cdot2$ with $i\in\{0,65\}$. Then $i=65$ by \eqref{diffS8}, i.e. the nontrivial elements of $C_{13}$ are homologies, with a common center $P\notin\cH_{64}$ and axis $\ell$.
By Lemma \ref{azione}, $G$ fixes $P$ and acts on $\ell$. By Lemma \ref{classificazione}, the nontrivial elements of $S_3$ are of type (B2) and fix two $\F_{64^2}$-rational points $Q,R\in\ell\cap\cH_{64}$.
Let $\cH_{64}$ have equation \eqref{normatraccia}.
Since $\PGU(3,q)$ is $2$-transitive on the $\F_{64^2}$-rational points of $\cH_{64}$, we can assume that $Q=(1,0,0)$ and $R=(0,0,1)$. Then
$ C_{13}=\{\diag(1,\lambda,1)\mid\lambda^{13}=1\}$ and $S_3=\{\diag(a^{65},a,1)\mid a^9=1\}=C_9 $;
see \cite{GSX}. Hence, $G$ is abelian and is the direct product $G=C_{13}\times C_9$. Let $\bar G\leq\PGU(3,64)$ be the group $\bar G=C_{65}\times C_9$, where $C_{65}$ is generated by $\diag(1,\bar\lambda,1)$, with $\bar\lambda$ a primitive $65$-th root of unity. Then $G$ is a normal subgroup of $\bar G$ of index $5$, so that $\bar G/G\leq\aut(\cH_{64}/G)$ has order $5$. Also, $\bar G/G$ has two $\F_{8}$-rational fixed points on $\cH_{64}/G$, namely the points lying under $Q$ and $R$. This is inconsistent with the structure of the automorphism group of $\cS_8$. In fact, by \cite[Theorem A.12]{HKT} (see also \cite[Remark (2.2)]{GKT}), any subgroup of $\aut(\cS_8)$ of order $5$ is a Singer group acting semiregularly on the $\F_{8}$-rational points of $\cS_8$.

{\bf Case $|G|=120$.} By \cite[Ex. 8.19]{Ox}, either $n_5=1$, or $G$ has a normal $2$-subgroup, or $G$ is isomorphic to the symmetric group $Sym(5)$.
The case $n_5=1$ is impossible by Lemma \ref{stabilizzatore}. Hence, $n_5=6$.
Suppose that $G$ has a normal $2$-subgroup $N$. By Lemma \ref{azione}, $G$ fixes the unique fixed point of $N$ on $\cH_{64}$. Then any $5$-element of $G$ is of type (A) by Lemma \ref{classificazione}. By Theorem \ref{caratteri}, $\Delta\geq24\cdot65$; this contradicts \eqref{diffS8}.
Suppose that $G\cong Sym(5)$. Then $G$ contains $25$ involutions. By Theorem \ref{caratteri}, $\Delta\geq25\cdot66$. This contradicts \eqref{diffS8}.

{\bf Case $|G|=126$.} Since $|G|\equiv2\pmod4$, $G$ has a normal subgroup $N$ of index $2$. Then $G$ is a semidirect product $G=N\rtimes C_2$. By Theorem \ref{caratteri}, $\Delta=62\cdot2+n_2\cdot66+(63-n_2)\cdot1$. This contradicts \eqref{diffS8}.

{\bf Case $|G|=128$.} By Theorem \ref{Mit}, $G$ fixes an $\F_{64^2}$-rational point of $\cH_{64}$. Then, by Theorem \ref{fissa2}, $14=2^{6-v-1}(2^{6-w}-1)$ with $0\leq v,w\leq5$. Hence, $v=4$, $w=3$. By theorem \ref{fissa2}, $G$ contains exactly $2^3-1$ involutions. By Theorem \ref{caratteri}, $\Delta=7\cdot66+120\cdot1$. This contradicts \eqref{diffS8}.

{\bf Case $|G|=130$.} By Sylow's Third Theorem, $n_{13}=1$, $n_5\in\{1,26\}$, and $n_2\in\{1,5,13,65\}$.
By \eqref{diffS8}, $\Delta=650$. Hence, by Theorem \ref{caratteri}, the nontrivial elements of $S_{13}$ are of type (B1). We remark that if $x$ is an element of type (C) normalizing an element $y$ of type (A) or (B1), then the element $yx$ is of type (E). If $n_5=1$, then $G$ is a semidirect product $G=C_{65}\rtimes C_2$; hence, $n_2=1$ by the above remark.
If $n_5=26$, then $G$ contains $12$ elements of order $13$, $4\cdot26$ elements of order $5$, and 12 elements of type (E) by the above remark. Hence, $n_2=1$.
Therefore $G$ contains a unique involution $\sigma$. By Lemma \ref{azione}, $S_{13}$ fixes the unique fixed point of $\sigma$ on $\cH_{64}$. This contradicts Lemma \ref{classificazione}.


{\bf Case $|G|=140$.} By Sylow's Third Theorem, $n_7=1$. This contradicts Lemma \ref{stabilizzatore}.

{\bf Case $|G|=144$.} By Theorem \ref{caratteri}, $\Delta=i\cdot66+j\cdot1+k\cdot2$ with $i+j+k=143$. Here, $i$ is the number of involutions in $G$, $j$ is the number of elements of order $6$ or $18$ in $G$, and $k$ is the number of elements of order $3$, $9$, or $4$ in $G$.
Suppose $i=1$. Then, by Lemma \ref{azione}, $G$ fixes the unique fixed point of the involution on $\cH_{64}$, which is $\F_{64^2}$-rational. By Theorem \ref{fissa},
$$ 14=\frac{(64-2^w)(64-(\gcd(9,65)-1)2^v)}{2\cdot144}\,, $$
with $v+w=4$, hence $w=0$. By Theorem \ref{fissa}, $G$ has no involutions, which is impossible.
Then $i\geq2$ and thus by \eqref{diffS8}, we have $i=2$ and $k=13$.
This implies that $G$ contains $2$ involutions and at most $13$ elements of order $4$. Hence, $G$ has a unique Sylow $2$-subgroup $S_{2}$.
Then, by Lemma \ref{azione}, $G$ fixes the unique fixed point of $S_{2}$ on $\cH_{64}$ and as before, it leads to a contradiction by Theorem \ref{fissa}.

{\bf Case $|G|=148$.} By Theorem \ref{Mit}, $|G|$ does not divide three times the order of any maximal subgroup of $\PSU(3,64)$. Hence, $G$ is not contained in any maximal subgroup of $\PGU(3,64)$, a contradiction.

{\bf Case $|G|=150$.} By Lemma \ref{pgruppo1}, $G$ contains $8$ elements of type (A). Hence, by Theorem \ref{caratteri}, $\Delta\geq8\cdot65$. This contradicts \eqref{diffS8}.


This completes the proof of Theorem \ref{teoremaS8}.

It may be noticed in the above proof that the hypothesis $g=14$ together with the $\F_{64^2}$-maximality of $\cS_8$ were sufficient to get a contradiction for $|G|\neq 117$.
Instead, a group $G$ of order $117$ with the required ramification exists, and we gave an explicit construction.
%
Such a group $G$ is uniquely determined up to conjugation.
Using MAGMA \cite{MAGMA}, we found a plane model of $\cH_{64}/G$ over $\mathbb F_2$, as well as a non-singular model of $\cH_{64}/G$ in $\PG(13,2)$.
\begin{proposition}\label{H/G}
Let $\cX$ be an $\F_{64^2}$-maximal curve of genus $14$. If $\cX$ is Galois covered by $\cH_{64}$ then $\cX\cong\cH_{64}/G$ where $G$ is a cyclic group $G\leq\PGU(3,64)$ of order $117$, and
a plane model of $\cX$ over $\mathbb F_2$ is the (singular) plane curve
$$\quad X^7Y^5+X+Y^5=0, $$ while
a nonsingular model in $\mathbb P^{13}$ of $\cX$ over $\mathbb F_2$ is the image of $\cX$ under the morphism
$$\varphi:\cX\rightarrow\mathbb P^{13},\quad (x,y,1)\mapsto(x,y,xy,x^2y,y^2,xy^2,x^2y^2,x^3y^2,y^3,xy^3,x^2y^3,x^3y^3,x^4y^3,1).$$
\end{proposition}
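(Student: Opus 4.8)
The plan is to split the argument into a group‑theoretic step that pins down $G$, followed by a computational step that produces the two explicit models.

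For the first step, suppose $\cX\cong\cH_{64}/G$ with $G\leq\PGU(3,64)$ and $g(\cX)=14$. Since $\cX$ is $\F_{64^2}$‑maximal of genus $14$, the chain
$$44<\frac{|\cH_{64}(\F_{8^4})|}{|\cX(\F_{8^4})|}\leq|G|\leq\frac{2g(\cH_{64})-2}{2g(\cX)-2}\leq155$$
holds exactly as in the opening of the proof of Theorem \ref{teoremaS8}, so $|G|$ lies in the list exhibited there and $\Delta=4030-26|G|$. I would then observe that, case by case, every exclusion in that proof used only the value $g=14$ (through $\Delta$ and through Theorems \ref{fissa2} and \ref{fissa}) and the $\F_{64^2}$‑maximality of the quotient, \emph{with the single exception of} $|G|=117$, where the concluding contradiction invoked the specific structure of $\aut(\cS_8)$. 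Deleting that last step, the analysis of the case $|G|=117$ still shows that, up to conjugacy in $\PGU(3,64)$ and in the norm--trace model \eqref{normatraccia}, $G$ is the subgroup of order $117$ of the cyclic group $\{\diag(a^{65},a,1)\mid a\in\F_{64^2}^{*}\}$, hence $G\cong C_{117}$ is determined up to conjugation. In particular a Galois‑covered $\F_{64^2}$‑maximal curve of genus $14$ does exist, so the proposition is not vacuous.

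For the second step I would compute the fixed field $\F_{64^2}(\cH_{64})^{G}$ directly. In the affine model $Y^{65}=X^{64}+X$ the generator $\diag(a^{65},a,1)$ of $G$ acts by $X\mapsto a^{65}X$, $Y\mapsto aY$, so $X^{i}Y^{j}$ is $G$‑invariant precisely when $117\mid 65i+j$. I would choose two $G$‑invariant rational functions $u,v$ of this shape, reducing with the curve relation when convenient, so that $\F_{64^2}(\cH_{64})^{G}=\F_{64^2}(u,v)$, and then compute the algebraic relation between $u$ and $v$. A MAGMA \cite{MAGMA} computation is used to confirm that this pair generates the whole fixed field, that (after a coordinate change) the relation is $X^{7}Y^{5}+X+Y^{5}=0$, that the corresponding function field has genus $14$ and is $\F_{64^2}$‑maximal, and that $\F_{2}$ is its exact constant field; since the relation has coefficients in $\F_{2}$, this exhibits $\cX$ as the constant‑field extension to $\F_{64^2}$ of an $\F_{2}$‑curve. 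Finally, to get the smooth model in $\mathbb P^{13}$, I would compute in MAGMA a basis of the space of regular differentials of $\cX$ from the plane model; these turn out to be $\frac{m_{i}(x,y)}{h(x,y)}\,dx$ for the fourteen monomials $m_{i}$ appearing in the definition of $\varphi$ and a common denominator $h$, so $\varphi$ is the canonical map of $\cX$, and one checks, again with MAGMA, that $\varphi$ is a closed embedding whose image is smooth and defined over $\F_{2}$.

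I expect the main obstacle to be bookkeeping rather than ideas. The group‑theoretic step is essentially a careful rereading of the proof of Theorem \ref{teoremaS8}, so the genuine work lies in organizing the invariant‑theoretic computation so that: the pair $u,v$ is chosen to make the relation come out exactly as $X^{7}Y^{5}+X+Y^{5}=0$ rather than a projectively equivalent but more cumbersome equation; the descent of the field of definition to $\F_{2}$ is transparent; and the listed morphism $\varphi$ can be verified to be a closed embedding onto a smooth curve. All three points are settled by explicit computation in MAGMA.
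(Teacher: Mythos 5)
Your proposal is correct and follows essentially the same route as the paper: the authors explicitly remark after the proof of Theorem \ref{teoremaS8} that the hypothesis $g=14$ together with the $\F_{64^2}$-maximality of the quotient rules out every admissible order except $|G|=117$, that the surviving group $G=C_{13}\times C_9=\{\diag(a^{65},a,1)\mid a^{117}=1\}$ (in the model \eqref{normatraccia}) is uniquely determined up to conjugation, and that the plane and nonsingular models were found by MAGMA. Your write-up only makes the computational step (invariant monomials for the fixed field and the canonical map for the $\mathbb P^{13}$ model) more explicit than the paper does.
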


\section{Proof of Theorem \ref{teoremaR3}}\label{sezioneR3}

By absurd, let $\cR_3\cong\cH_{27}/G$ for $G\leq\PGU(3,27)$. The order of $\PGU(3,27)$ is equal to $2^5\cdot3^9\cdot7^2\cdot13\cdot19\cdot37$. From the Riemann-Hurwitz formula,
$$ 12<\frac{|\cH_{27}(\F_{27^2})|}{|\cR_3(\F_{27^2})|}\leq|G|\leq\frac{2g(\cH_{27})-2}{2g(\cR_3)-2}\leq 25. $$
Since $|G|$ divides $|\PGU(3,27)|$,
$$|G| \in \{13,14,16,18,19,21,24\}.$$
The different divisor has degree
\begin{equation}\label{diffR3}
\Delta=(2g(\cH_{27})-2)-|G|(2g(\cR_3)-2) = 700-28\cdot|G|.
\end{equation}

{\bf Case $|G|=13$.} By Theorem \ref{caratteri}, $\Delta=12\cdot2$. This contradicts \eqref{diffR3}.

{\bf Case $|G|=14$.} By Sylow and Schur-Zassenhaus theorems, $G$ is a semidirect product $G=C_7\rtimes C_2$. All nontrivial elements of $C_7$ are of the same type, which is either (A) or (B1) by Lemma \ref{classificazione}.
Therefore, by Theorem \ref{caratteri}, $\Delta=6\cdot i+7\cdot28$, with $i\in\{0,28\}$. This contradicts \eqref{diffR3}.

{\bf Case $|G|=16$.} $\PGU(3,27)$ has just three conjugacy classes of subgroups of order $16$, which are isomorphic either to the Iwasawa group $M_{16}=\langle x,y\mid x^8=y^2=1,yxy^{-1}=x^5 \rangle$, or to the direct product $C_4\times C_4$, or to the central product $D_8\circ C_4=\langle \alpha,\beta,\gamma\mid \alpha^4=\beta^2=1,\beta\alpha\beta^{-1}=\alpha^{-1},\alpha^2=\gamma^2,\alpha\gamma=\gamma\alpha,\beta\gamma=\gamma\beta \rangle$.

Suppose $G\cong M_{16}$. By MAGMA computation, the normalizer $N$ of $G$ in $\PGU(3,27)$ has order $224$, and the quotient group $N/G\leq\aut(\cH_{27}/G)$ is a cyclic group of order $14$. On the other hand, the subgroups of $Ree(3)\cong{\rm P\Gamma L}(2,8)$ of order $14$ are not abelian, a contradiction.

Suppose $G\cong C_4\times C_4$. By MAGMA computation, the normalizer $N$ of $G$ in $\PGU(3,27)$ has order $4704$. Hence, the group $N/G\leq\aut(\cH_{27}/G)$ has order $294$, which does not divide the order of $Ree(3)$. This contradicts $\cH_{27}/G\cong\cR_3$.

Suppose $G\cong D_8\circ C_4$. By MAGMA computation, the normalizer $N$ of $G$ in $\PGU(3,27)$ has order $672$, and the group $N/G\leq\aut(\cH_{27}/G)$ is isomorphic to $C_{21}\rtimes C_2$. On the other hand, the subgroups of $Ree(3)$ of order $42$ have no cyclic subgroups of order $21$, a contradiction.

{\bf Case $|G|=18$.} By Sylow's Third Theorem, $n_3=1$.
By \cite[Theorem 11.74]{HKT}, $S_3$ has a unique fixed point $P$ on $\cH_{27}$, which is $\F_{27^2}$-rational.
By Lemma \ref{azione}, $G$ fixes $P$. Then, by Theorem \ref{fissa},
$$ 15=\frac{(27-3^w)(27-(\gcd(2,28)-1)3^v)}{2\cdot18}\,, $$
with $v+w=2$, which is impossible.

{\bf Case $|G|=19$.} By Theorem \ref{caratteri}, $\Delta=18\cdot3$. This contradicts \eqref{diffR3}.

{\bf Case $|G|=21$.} By Sylow and Schur-Zassenhaus theorems, $G$ is a semidirect product $G=C_7\rtimes C_3$. All nontrivial elements of $C_7$ are of the same type, which is either (A) or (B1) by Lemma \ref{classificazione}.
Thus, by Theorem \ref{caratteri}, $\Delta=6\cdot i+2n_3\cdot29+(14-2n_3)\cdot1$, with $i\in\{0,28\}$. This contradicts \eqref{diffR3}.

{\bf Case $|G|=24$.} Since $3\mid|G|$, we have $\Delta\geq29$ by Theorem \ref{caratteri}. This contradicts \eqref{diffR3}.

This completes the proof of Theorem \ref{teoremaR3}.

It may be noticed in the above proof that the hypothesis $g=15$ together with the $\F_{27^2}$-maximality of $\cR_3$ ruled out all cases but $|G|=16$. For this exception, three cases are treated separately.
\begin{itemize}
\item $G\cong M_{16}$. Then $G$ contains $3$ involutions, $4$ elements of order $4$, and $8$ elements of order $8$. By Theorem \ref{caratteri}, the quotient curve $\cH_{27}/G$ has genus $18$.
\item $G\cong C_4\times C_4$. By the Riemann-Hurwitz formula, $\cH_{27}/G$ has genus $15$. Also, $G$ has $9$ elements of type (A) and $6$ elements of type (B1). Hence, $G$ fixes the vertices of a triangle $T$. Let $\cH_{27}$ have equation \eqref{fermat}. Up to conjugation, $T$ is the fundamental triangle and $G=\{\diag(\lambda,\mu,1)\mid\lambda^4=\mu^4=1\}$. Therefore a (singular) plane model of $\cH_{27}/G$ is $X^{7}+Y^{7}+1=0$.
\item  $G\cong D_8\circ C_4$. By the Riemann-Hurwitz formula, $\cH_{27}/G$ has genus $15$. Also, $G$ contains $9$ elements of type (A) and $6$ elements of type (B1). In particular, the non-central involutions of $D_8$ are non-commuting elements of type (A). Thus, the generator $y$ of the center $C_4$ is not of type (B1). Hence, $y$ is of type (A).
Let $\cH_{27}$ have equation \eqref{fermat}. Up to conjugation, the generators of $G$ are
$\alpha:(X,Y,T)\mapsto(Y,-X,T)$, $\beta=\diag(1,-1,1)$, and $\gamma=\diag(\lambda,\lambda,1)$, where $\lambda^2=-1$. A plane model of $\cH_{27}/G$ is obtained by MAGMA computation, as follows.
\end{itemize}

\begin{proposition}\label{H27/G}
Let $\cX$ be an $\F_{27^2}$-maximal curve of genus $15$. If $\cX$ is Galois covered by $\cH_{27}$ then $\cX\cong\cH_{27}/G$ where $G \le \PGU(3,27)$ has order $16$, and one of the following cases occurs.
\begin{itemize}
\item $G\cong C_4\times C_4$ and a plane model for $\cX$ is given by the affine equation $$X^7+Y^7+1=0\,.$$
\item $G\cong D_8 \circ C_4$ and a plane model for $\cX$ is given by the affine equation
$$ X^{28} + X^{27} + X^{26} + 2X^{23} + 2X^{22} + X^{21} + 2X^{12}Y^{14} +
        X^{10}Y^{14} + 2X^{7}Y^{14} + Y^{28}=0. $$
\end{itemize}
\end{proposition}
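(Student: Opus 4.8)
The plan is to combine the case analysis already carried out in the proof of Theorem~\ref{teoremaR3} with a closer study of the subgroups of $\PGU(3,27)$ of order $16$.

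Since $\cX$ is Galois covered by $\cH_{27}$, write $\cX\cong\cH_{27}/G$ for a nontrivial $G\le\PGU(3,27)$. As $g(\cX)=15=g(\cR_3)$, the Riemann--Hurwitz formula together with the bound $|G|\ge|\cH_{27}(\F_{27^2})|/|\cX(\F_{27^2})|$ yields exactly the restriction $|G|\in\{13,14,16,18,19,21,24\}$ of the proof of Theorem~\ref{teoremaR3}, with different divisor of degree $\Delta=700-28|G|$. The first step is to remark that, for every value in this list other than $16$, the argument used for the corresponding case in the proof of Theorem~\ref{teoremaR3} in fact excludes $g(\cH_{27}/G)=15$, and not merely $\cH_{27}/G\cong\cR_3$: for $|G|\in\{13,14,19,21,24\}$ the value of $\Delta$ forced by Theorem~\ref{caratteri} is never equal to $700-28|G|$, and for $|G|=18$ the genus formula of Theorem~\ref{fissa} has no solution equal to $15$. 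Hence $|G|=16$.

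The second step is to use that, up to conjugacy, $\PGU(3,27)$ has exactly three subgroups of order $16$, isomorphic to $M_{16}$, $C_4\times C_4$, and $D_8\circ C_4$; this can be derived from Theorem~\ref{Mit} and the structure of a point stabilizer and of the diagonal torus of order $28^2$, and checked with MAGMA. For $G\cong M_{16}$ one determines via Lemma~\ref{classificazione} and Theorem~\ref{caratteri} the types of the $3$ involutions, of the $4$ elements of order $4$ (of order dividing $28$, hence of type (A) or (B1)), and of the $8$ elements of order $8$ (of order dividing $28^2-1$ but not $28$, hence of type (B2), with $i(\sigma)=2$); the resulting $\Delta$ gives $g(\cH_{27}/G)=18\ne15$, so this class is discarded.

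It remains to treat $G\cong C_4\times C_4$ and $G\cong D_8\circ C_4$; for both, the Riemann--Hurwitz formula already forces genus $15$. In either case every nontrivial element has order dividing $28$, hence is of type (A) or (B1) by Lemma~\ref{classificazione}, and $\Delta=252$ forces nine elements of type (A) and six of type (B1). For $G\cong C_4\times C_4$ one may take $\cH_{27}$ in the Fermat model \eqref{fermat} with $G=\{\diag(\lambda,\mu,1)\mid\lambda^4=\mu^4=1\}$ fixing the fundamental triangle; the $G$-invariant map $(x,y)\mapsto(x^4,y^4)$ then identifies $\cH_{27}/G$ with the degree-$7$ Fermat curve $X^7+Y^7+1=0$. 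For $G\cong D_8\circ C_4$, the non-central involutions of the $D_8$ factor are non-commuting homologies, which forces the generator of the central $C_4$ to be of type (A); after fixing coordinates one gets $G=\langle\alpha,\beta,\gamma\rangle$ with $\alpha\colon(X,Y,T)\mapsto(Y,-X,T)$, $\beta=\diag(1,-1,1)$, $\gamma=\diag(\lambda,\lambda,1)$ where $\lambda^2=-1$, and a MAGMA computation of the field of invariants of $G$ produces the displayed degree-$28$ plane model. The main obstacle is expected to be this last step, namely obtaining and certifying that explicit equation, together with the preliminary classification of the order-$16$ subgroups up to conjugacy.
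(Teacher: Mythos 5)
Your proposal follows essentially the same route as the paper: reduce to $|G|=16$ by re-reading the case analysis of Theorem \ref{teoremaR3} with only $g=15$ and $\F_{27^2}$-maximality as hypotheses, invoke the (MAGMA-checked) fact that $\PGU(3,27)$ has exactly three conjugacy classes of subgroups of order $16$, discard $M_{16}$ by computing genus $18$ from the element types, and obtain the two plane models exactly as the paper does (the map $(x,y)\mapsto(x^4,y^4)$ on the Fermat model for $C_4\times C_4$, and a MAGMA invariant computation for $D_8\circ C_4$). The only slip is the harmless typo ``order dividing $28^2-1$'' for the eight elements of order $8$ in $M_{16}$, which should read $q^2-1=728$ (they have order dividing $q^2-1$ but not $q+1$, hence are of type (B2) with $i(\sigma)=2$).
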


\section{Galois subcovers of $\cH_{27}$}\label{sezioneSpettro}

Theorem \ref{spettro} shows the complete spectrum of genera of Galois subcovers of $\cH_{27}$, consisting of integers $g$ which are the genera of a quotient curve $\cH_{27}/G$ with $G$ ranging on the set of all subgroups of $PGU(3,27)$.
\begin{theorem}\label{spettro}
The spectrum of genera of Galois subcovers of $\cH_{27}$ is
$$ \Sigma_{27} = \{0,1,3,4,5,6,7,9,10,12,13,15,16,17,18,19,24,25,$$ $$\hspace{2 cm} 26,27,36,39,43,51,52,78,85,108,117,169,351\}. $$
\end{theorem}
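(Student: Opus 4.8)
The plan is to carry out an exhaustive enumeration of the genera of quotient curves $\cH_{27}/G$ as $G$ ranges over all subgroups of $\PGU(3,27)$, using the same machinery developed above: the classification of element types in Lemma \ref{classificazione}, the contributions $i(\sigma)$ to the different divisor in Theorem \ref{caratteri}, the Riemann-Hurwitz formula $\Delta=(2g(\cH_{27})-2)-|G|(2g(\cH_{27}/G)-2)$ with $g(\cH_{27})=351$, and the formulas of Theorems \ref{fissa2} and \ref{fissa} for groups fixing an $\fqs$-rational point of $\cH_{27}$. The order of $\PGU(3,27)$ is $2^5\cdot3^9\cdot7^2\cdot13\cdot19\cdot37$, so the abstract orders of subgroups are heavily constrained from the outset; in particular the Sylow structure is tame enough that Sylow's theorems, the Schur--Zassenhaus theorem, and the classification of maximal subgroups in Theorem \ref{Mit} pin down the possible isomorphism types of $G$ fairly tightly.

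The key steps, in order, are as follows. First I would treat the $p$-subgroups ($p=3$) separately: by Theorem \ref{fissa2}'s analogue for $q=27$ (i.e.\ the norm-trace description), every $3$-subgroup fixes an $\fqs$-rational point of $\cH_{27}$, and Theorem \ref{fissa} with $m=1$ gives $g=3^{w}\cdot(3^{3-w}-1)/\text{(something)}$; running over all admissible pairs $(v,w)$ with $v+w\le 3$ and over the additive-subgroup condition $V^{q+1}\subseteq W$ produces a first batch of genera. Second, I would handle groups fixing an $\fqs$-rational point of $\cH_{27}$ with order divisible by a prime $\ne 3$: Theorem \ref{fissa} gives $g=(27-3^w)(27-(\gcd(m,28)-1)3^v)/(2mp^u)$, and since $m\mid 27^2-1=728=2^3\cdot7\cdot13$ the possible $m$ and the possible $(v,w)$ are few, yielding another explicit finite list. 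Third, for groups fixing a point \emph{off} $\cH_{27}$ (equivalently a chord), a self-polar triangle, or a Singer triangle (cases (ii), (iii), (iv) of Theorem \ref{Mit}), I would analyze the action on the fixed triangle: here all relevant elements are of type (A), (B1), (B2), or (B3), with $i(\sigma)\in\{0,2,3,q+1\}=\{0,2,3,28\}$, so $\Delta$ is a small nonnegative-integer combination and Riemann-Hurwitz forces $g$ into a short range. Fourth, the sporadic maximal subgroups ($\PGL(2,27)$, $\PSU(3,3)$, the Hessian groups, $\mathrm{Alt}(6)$ since $p=3$ and $n=3$ is odd -- here one must check the precise hypotheses of Theorem \ref{Mit}(x)) and their subgroups would be enumerated directly, using MAGMA where needed, to compute the remaining genera. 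Finally, I would assemble all the genera found, check they coincide with $\Sigma_{27}$, and verify completeness by confirming that every subgroup $G$ falls under one of the above headings via Theorem \ref{Mit} applied to $G\cap\PSU(3,27)$ and the index-$3$ extension.

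The main obstacle will be \emph{completeness rather than realizability}: producing the listed genera is mechanical once the right $G$ is exhibited, but proving that no other genus arises requires controlling \emph{all} subgroups up to conjugacy, not merely the maximal ones. The reduction via Theorem \ref{Mit} only classifies maximal subgroups of $\PSU(3,27)$, so for each maximal subgroup $M$ one must in principle understand the genera of $\cH_{27}/G$ for every $G\le M$; the delicate cases are the large $3$-local subgroups (the point stabilizer of order $3^9\cdot(27^2-1)/\gcd(3,28)=3^9\cdot728$ and its many subgroups) and the cross-characteristic subgroups where a prime-to-$3$ part and a nontrivial $3$-part interact. My strategy for this is to combine the uniform formulas of Theorems \ref{fissa2} and \ref{fissa} (which already cover \emph{all} subgroups fixing an $\fqs$-rational point of $\cH_{27}$, hence all of case (i) and its subgroups at one stroke) with Lemmas \ref{pgruppo1}, \ref{pgruppo2}, and \ref{stabilizzatore} to force most remaining $G$ to fix a triangle, reducing to abelian or small almost-abelian groups whose element types and hence $\Delta$ are immediate; the genuinely residual finite list of groups (those inside the sporadic maximal subgroups, or index-$2$ and index-$3$ overgroups of the triangle stabilizers) is then small enough to be settled by explicit MAGMA computation, exactly as was done for the order-$16$ groups in Proposition \ref{H27/G}. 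The classification of subgroups $G\le\PGU(3,27)$ realizing each $g>1$ is extracted as a by-product of this same case analysis.
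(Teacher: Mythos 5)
Your plan is sound and rests on exactly the same machinery as the paper (Lemma \ref{classificazione}, Theorem \ref{caratteri}, Theorems \ref{fissa2}--\ref{fissa}, the Riemann--Hurwitz formula, and MAGMA for residual cases), but it runs the enumeration in the opposite direction. You sweep over subgroups $G$, grouped by what they fix (an $\fqs$-rational point of $\cH_{27}$, a point off $\cH_{27}$, a triangle, or a sporadic maximal subgroup), and collect the resulting genera; the paper instead sweeps over the candidate genera $1<g\le 351$ and, for each $g$, traps $|G|$ in the two-sided window
$$\frac{19684}{730+54g}\;\le\;|G|\;\le\;\frac{700}{2g-2}\,,$$
whose lower bound comes from the maximality of $\cH_{27}/G$ (the ratio of point counts) --- a tool your proposal never invokes. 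That lower bound is what turns the paper's non-realizability claims (e.g.\ $g=2\notin\Sigma_{27}$, the input to Corollary \ref{noncoperte}) into finite checks over a short list of orders dividing $|\PGU(3,27)|$, whereas in your organization the absence of a genus is certified only once the entire subgroup sweep is known to be complete; conversely, your approach buys uniformity, since Theorems \ref{fissa2}--\ref{fissa} dispose of all subgroups of a point stabilizer in one stroke, at the price of making completeness the whole burden --- which you correctly identify as the main obstacle. Two details in your sweep need repair. First, Theorem \ref{fissa2} is stated only for $p=2$, so for the $3$-subgroups you must either quote the odd-characteristic analogue from \cite{GSX} or simply count type (C) versus type (D) elements via Theorem \ref{caratteri} (every $3$-element of $\PGU(3,27)$ has order exactly $3$, so this is immediate and no closed formula is needed). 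Second, your list of sporadic maximal subgroups is off for $q=27$: since $3\nmid q+1=28$ there are no Hessian groups and the hypotheses of Theorem \ref{Mit}(x) for ${\rm Alt}(6)$ fail, while ${\rm PSL}(2,7)$ does occur (because $-7=-1$ is a nonsquare in $\F_{27}$) and must be included alongside ${\rm PGL}(2,27)$, $\PSU(3,3)$ and its index-$3$ extension; none of this changes the final set $\Sigma_{27}$, but the enumeration is not complete without it.
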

The proof relies on the results of Section \ref{preliminari}.  A case-by-case analysis of all integers $g$ with $1<g\leq g(\cH_{27})$ is combined with
\begin{equation}\label{limitiamo} \frac{19684}{730+54g}= \frac{|\cH_{27}(\F_{27^2})|}{|\cH_{27}/G(\F_{27^2})|} \leq|G|\leq \frac{2g(\cH_{27})-2}{2g(\cH_{27}/G)-2} = \frac{700}{2g-2}\,, \end{equation}
which bounds the order of a putative group $G\leq\PGU(3,27)$ such that $\cH_{27}/G$ has genus $g$. This leads us to
look inside the structure of the groups $G$ satisfying \eqref{limitiamo} and compute the genus of $\cH_{27}/G$, for $g>1$. These results are summarized in Theorem \ref{spettro}. For each $g>1$ in $\Sigma_{27}$, Tables {\rm \ref{tabella1}} and {\rm \ref{tabella2}} provide a classification of the groups $G$ for which $\cH_{27}/G$ has genus $g$.

\begin{center}
\begin{table}[htbp]
\begin{small}
\caption{Quotient curves $\cH_{27}/G$ of genus $g\geq 17$}\label{tabella1}
\begin{tabular}{|c|c|c|}
\hline \bf $g$ & $|G|$ & \textrm{structure of $G$} \\
\hline\hline 351 & 1 & \textrm{trivial group.}\\
\hline 169 & 2 & $G=C_2=\langle\sigma\rangle$, $\sigma$ of type (A).\\
\hline 117 & 3 & $G=C_3=\langle\sigma\rangle$, $\sigma$ of type (D).\\
\hline 108 & 3 & $G=C_3=\langle\sigma\rangle$, $\sigma$ of type (C).\\
\hline 85 & 4 & $G=C_4=\langle\sigma\rangle$, $\sigma$ of type (B1).\\
\hline 78 & 4 & $G=C_4=\langle\sigma\rangle$, $\sigma$ of type (A).\\
\hline 52 & 6 & $G=C_6=\langle\sigma\rangle$, $\sigma$ of type (E).\\
  &  & $G=Sym(3)=\langle\alpha\rangle\rtimes\langle\beta\rangle$, $\alpha$ of type (C), $\beta$ of type (A).\\
\hline 51 & 7 & $G=C_7=\langle\sigma\rangle$, $\sigma$ of type (B1).\\
\hline 43 & 8 & $G= Q_8$ quaternion group, 1 element of type (A), 6 elements of type (B1).\\
\hline 39 & 7 & $G=C_7=\langle\sigma\rangle$, $\sigma$ of type (A).\\
 & 8 & $G=C_8=\langle\sigma\rangle$, $\sigma$ of type (B2).\\
 & 9 & $G=C_3\times C_3=\langle\alpha\rangle\times\langle\beta\rangle$, $\alpha$ and $\beta$ of type (D).\\
\hline 36 & 8 & $G=C_4\times C_2=\langle\alpha\rangle\times\langle\beta\rangle$, $\alpha$ of type (B1), $\beta$ of type (A).\\
 & 8 & $G=D_8$ dihedral group, involutions of type (A), 2 elements of type (B1).\\
 & 9 & $G=C_3\times C_3=\langle\alpha\rangle\times\langle\beta\rangle$, $\alpha$ of type (C), $\beta$ of type (D).\\
\hline 27 & 9 & $G=C_3\times C_3=\langle\alpha\rangle\times\langle\beta\rangle$, $\alpha$ and $\beta$ of type (C).\\
 & 13 & $G=C_{13}=\langle\sigma\rangle$, $\sigma$ of type (B2).\\
\hline 26 & 12 & $G= Alt(4)$, involutions of type (A), other elements of type (D).\\
\hline 25 & 14 & $G=C_{14}=\langle\sigma\rangle$, $\sigma$ of type (B1).\\
\hline 24 & 12 & $G=C_{12}=\langle\sigma\rangle$, $\sigma$ of type (E).\\
\hline 19 & 14 & $G=C_{14}=\langle\sigma\rangle$, $\sigma$ of type (B1), $\sigma^2$ of type (A).\\
\hline 18 & 16 & $G= M_{16}$, 5 elements of type (A),\\ & & 2 elements of type (B1), 8 elements of type (B2).\\
\hline 18 & 19 & $G=C_{19}=\langle\sigma\rangle$, $\sigma$ of type (B3).\\
\hline 17 & 21 & $G=C_7\rtimes C_3=\langle\alpha\rangle\rtimes\langle\beta\rangle$, $\alpha$ of type (B1), $\beta$ of type (B2).\\
\hline
\end{tabular}
\end{small}
\end{table}
\end{center}

\begin{center}
\begin{table}[htbp]
\begin{small}
\caption{Quotient curves $\cH_{27}/G$ of genus $3 \leq g \leq 16$}\label{tabella2}
\begin{tabular}{|c|c|c|}
\hline \bf $g$ & $|G|$ & \textrm{structure of $G$} \\
\hline\hline
 16 & 18 & $G=C_3\times (C_3\rtimes C_2)=\langle\alpha\rangle\times(\langle\beta\rangle\rtimes\langle\gamma\rangle)$,\\ & & $\alpha$ of type (C), $\beta$ of type (D), $\gamma$ of type (A).\\
\hline 15 & 16 & $G=C_4\times C_4=\langle\alpha\rangle\times\langle\beta\rangle$, $\alpha$ and $\beta$ of type (A).\\
 & 16 & $G=D_8\circ C_4=(\langle\alpha\rangle\rtimes\langle\beta\rangle)\circ\langle\gamma\rangle$, $\alpha$ of type (B1), $\beta$ and $\gamma$ of type (A).\\
 13 & 14 & $G=C_{14}=\langle\sigma\rangle$, $\sigma$ of type (A).\\
 & 18 & $G=C_3\times (C_3\rtimes C_2)=\langle\alpha\rangle\times(\langle\beta\rangle\rtimes\langle\gamma\rangle)$, $\alpha$ and $\beta$ of type (D), $\gamma$ of type (A).\\
 & 18 & $G=C_3\times (C_3\rtimes C_2)=\langle\alpha\rangle\times(\langle\beta\rangle\rtimes\langle\gamma\rangle)$, $\alpha$ and $\beta$ of type (C), $\gamma$ of type (A).\\
 & 26 & $G=C_{26}=\langle\sigma\rangle$, $\sigma$ of type (E).\\
 & 27 & $G=(C_3\times C_3)\rtimes C_3=(\langle\alpha\rangle\times\langle\beta\rangle)\rtimes\langle\gamma\rangle$, $\alpha$, $\beta$, $\gamma$ of type (D).\\
 & 28 & $G=C_{28}=\langle\sigma\rangle$, $\sigma$ of type (B1), 1 element of type (A).\\
\hline 12 & 21 & $G=C_{21}=\langle\sigma\rangle$, $\sigma$ of type (E).\\
 & 24 & $G=C_3\RT C_8=\la\alpha\ra\rtimes\la\beta\ra$, $\alpha$ of type (C), $\beta$ of type (B2).\\
 & 27 & $G=C_3\times (C_3\times C_3)=\langle\alpha\rangle\times(\langle\beta\rangle\times\langle\gamma\rangle)$, $\alpha$ of type (C), $\beta$ and $\gamma$ of type (D).\\
 & 28 & $G=C_{28}=\langle\sigma\rangle$, $\sigma$ of type (B1), 3 elements of type (A).\\
 & 28 & $G=C_{14}\times C_2=\langle\alpha\rangle\times\langle\beta\rangle$, $\alpha$ of type (B1), $\beta$ of type (A), 3 elements of type (A).\\
\hline 10 & 24 & $G\cong{\rm SL}(2,3)$, 1 element of type (A), 6 elements of type (B1),\\ & & 8 elements of type (C), 8 elements of type (E).\\
\hline 9 & 37 & $G=C_{37}=\langle\sigma\rangle$, $\sigma$ of type (B3).\\
\hline 7 & 26 & $G=C_{13}\rtimes C_2=\langle\alpha\rangle\rtimes\langle\beta\rangle$, $\alpha$ of type (B2), $\beta$ of type (A).\\
 & 28 & $G=C_{28}=\langle\sigma\rangle$, $\sigma$ of type (B1), 14 elements of type (A).\\
 & 52 & $G=C_{13}\RT C_4=\la\alpha\ra\RT\la\beta\ra$, $\alpha$ of type (B2), $\beta$ of type (B1).\\
\hline 6 & 28 & $G=C_{14}\times C_2=\langle\alpha\rangle\times\langle\beta\rangle$, $\alpha$ and $\beta$ of type (A), 15 elements of type (A).\\
 & 32 & $G=C_4\wr C_2 = \langle\alpha\rangle\wr\langle\beta\rangle$ wreath product,\\ & & 13 elements of type (A), 10 elements of type (B1), 8 elements of type (B2).\\
 & 52 & $G=C_{52}=\la\sigma\ra$, $\sigma$ of type (B2), 3 elements of type (A).\\
 & 57 & $G=C_{19}\RT C_3 = \la\alpha\ra\RT\la\beta\ra$, $\alpha$ of type (B3), $\beta$ of type (D).\\
\hline 5 & 48 & $G=(C_4\times C_4)\rtimes C_3=(\la\alpha\ra\T\la\beta\ra)\RT\la\gamma\ra$, $\alpha$ and $\beta$ of type (A), $\gamma$ of type (D).\\
\hline 4 & 42 & $G=C_{42}=\langle\sigma\rangle$, $\sigma$ of type (E).\\
 & 48 & $G=(D_8\circ C_4)\RT\la\sigma\ra$, $\sigma$ of type (C).\\
 & 54 & $G=(C_3\T C_3 \RT C_3)\RT\la\sigma\ra$, $\sigma$ of type (A).\\
 & 56 & $G=Q_8\T \la\sigma\ra$, $\sigma$ of type (A).\\
 & 72 & $G=C_4 \times C_2 \rtimes(\la\alpha\ra\times \la\beta\ra)$, $\alpha$ and $\beta$ of type (D).\\
 & 81 & $G= C_3\T C_3\T C_3\T C_3 = \la\alpha\ra\T \la\beta\ra\T \la\gamma\ra\T \la\delta\ra$, $\alpha$ of type (C), $\beta,\gamma,\delta$ of type (D).\\
\hline 3 & 49 & $G=C_7\times C_7=\langle\alpha\rangle\times\langle\beta\rangle$, $\alpha$ and $\beta$ of type (A).\\
 & 56 & $G=\la\sigma\ra\RT D_8$, $\sigma$ of type (A).\\
 & 63 & $G=C_7\T C_3 \T C_3 = \la\alpha\ra\T\la\beta\ra\T\la\gamma\ra$, $\alpha$ of type (A), $\beta$ and $\gamma$ of type (C).\\
 & 72 & $G=C_3 \T C_3 \RT C_8 = \la\alpha\ra\T\la\beta\ra\RT\la\gamma\ra$, $\alpha$ and $\beta$ of type (C), $\gamma$ of type (B2).\\
 & 81 & $G= C_3\T C_3\T C_3\T C_3 = \la\alpha\ra\T \la\beta\ra\T \la\gamma\ra\T \la\delta\ra$, $\alpha,\beta$ of type (C), $\gamma,\delta$ of type (D).\\
 & 91 & $G=C_{91}=\la\sigma\ra$, $\sigma$ of type (B2).\\
 & 104 & $G=C_{13}\RT C_8=\la\alpha\ra\RT\la\beta\ra$, $\alpha$ and $\beta$ of type (B2), or $G=C_{104}=\la\sigma\ra$, $\sigma$ of type (B2).\\
 & 111 & $G=C_{37}\RT C_3=\la\alpha\ra\RT\la\beta\ra$, $\alpha$ of type (B3), $\beta$ of type (D).\\
 & 112 & $G=C_7\T C_4 \T C_4 = \la\alpha\ra\T\la\beta\ra\T\la\gamma\ra$, $\alpha$ of type (B1), $\beta$ and $\gamma$ of type (A).\\
\hline
\end{tabular}
\end{small}
\end{table}
\end{center}

Theorem \ref{spettro} shows that some quotient curves of $\cR_3$ happen not be Galois subcovers of $\cH_{27}$. A partial list of them is given in the following proposition.
\begin{corollary}\label{noncoperte}
The quotient curves $\cR_3/G_1$, $\cR_3/G_2$, and $\cR_3/G_3$ are not Galois subcovers of $\cH_{27}$ for the groups $G_1,G_2,G_3$ defined as follows.
\begin{itemize}
\item The maximal subgroups $G_1\leq Ree(3)$ of order $24$ centralizing an involution $\sigma\in Ree(3)$, which are isomorphic to $\la\sigma\ra\times Alt(4)$.
\item The groups $G_2\leq Ree(3)$ of order $6$ which are isomorphic to $Sym(3)$.
\item  The cyclic groups $G_3\leq Ree(3)$ of order $6$.
\end{itemize}
\end{corollary}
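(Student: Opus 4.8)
The strategy is to play the genus spectrum of $\cH_{27}$ against the genera of the quotients of $\cR_3$. Concretely, if $\cR_3/G_i$ were a Galois subcover of $\cH_{27}$, then $\cR_3/G_i\cong\cH_{27}/G$ for some $G\le\PGU(3,27)$, and in particular $g(\cR_3/G_i)$ would have to lie in the spectrum $\Sigma_{27}$ of Theorem \ref{spettro}; moreover $G$ would have to appear in the corresponding line of Table \ref{tabella1} or \ref{tabella2}. So the first task is to compute $g_i:=g(\cR_3/G_i)$ for $i=1,2,3$, where $G_1,G_2,G_3\le Ree(3)=\aut(\cR_3)$ are the groups singled out by \c{C}ak\c{c}ak and \"Ozbudak \cite{CO}.

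To this end I would use the Riemann-Hurwitz formula \cite[Thm. 3.4.13]{Sti} and Hilbert's different formula \cite[Thm. 3.8.7]{Sti} for the Galois cover $\cR_3\to\cR_3/G_i$, together with the description of how the elements of $Ree(3)$ act on $\cR_3$: the involutions of $Ree(3)$ are all conjugate and fix exactly $q+1=4$ of the $q^3+1=28$ points of the Ree-Tits ovoid, while the unipotent elements (of orders $3$ and $9$, contained in a Sylow $3$-subgroup $U\cong C_9\rtimes C_3$ of order $27$) fall into a few conjugacy classes whose fixed loci and ramification filtrations on $\cR_3$ are those computed in \cite{CO}. This yields the values $g_i$. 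Now the argument splits. If $g_i\notin\Sigma_{27}$, then by Theorem \ref{spettro} there is no subgroup $G\le\PGU(3,27)$ at all with $g(\cH_{27}/G)=g_i$, and $\cR_3/G_i$ cannot be a quotient of $\cH_{27}$. If instead $g_i\in\Sigma_{27}$, I would read off from Table \ref{tabella1} or \ref{tabella2} the complete list of groups $G$ realising the genus $g_i$ as $g(\cH_{27}/G)$ and rule out each of them: when $\cH_{27}/G$ is given by an explicit plane model in Section \ref{sezioneSpettro}, by comparing that model with an explicit model of $\cR_3/G_i$ produced from \cite{CO}; otherwise by comparing the full automorphism groups $\aut(\cR_3/G_i)\supseteq N_{Ree(3)}(G_i)/G_i$ and $\aut(\cH_{27}/G)\supseteq N_{\PGU(3,27)}(G)/G$ (computed with MAGMA \cite{MAGMA}), which must coincide if the two curves are $\F_{27^2}$-isomorphic. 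For the smallest admissible genera one argues directly, using that low-genus $\F_{27^2}$-maximal curves over $\F_{729}$ fall into very few isomorphism classes and matching these against the Galois quotients of $\cH_{27}$.

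The main obstacle is precisely the in-spectrum case: any two $\F_{27^2}$-maximal curves of the same genus $g$ over $\F_{729}$ share the $L$-polynomial $(27T+1)^{2g}$, hence have the same number of rational points over every extension of $\F_{729}$, so they cannot be separated by zeta-function data and one is forced onto finer birational invariants — the automorphism group, or explicit defining equations obtained by computer algebra. A secondary difficulty is the exact genus computation itself: since the unipotent elements of $Ree(3)$ of orders $3$ and $9$ do not all act with the same fixed-point behaviour on $\cR_3$, the wild-ramification contributions to the different of $\cR_3\to\cR_3/G_i$ must be bookkept class by class.
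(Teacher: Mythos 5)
Your strategy coincides with the paper's: compute the genera of $\cR_3/G_1$, $\cR_3/G_2$, $\cR_3/G_3$ (the paper simply cites \cite{CO} for the fact that all three equal $2$) and compare against the spectrum $\Sigma_{27}$ of Theorem \ref{spettro}. Since $2\notin\Sigma_{27}$, the out-of-spectrum branch of your argument applies immediately and the entire in-spectrum contingency (automorphism-group and explicit-model comparisons) is never needed; the only thing your write-up leaves implicit is this concrete value of the genera, which is the one fact that makes the corollary a one-line consequence of Theorem \ref{spettro}.
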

\begin{proof}
 From previous work of \c{C}ak\c{c}ak and \"Ozbudak, each of the quotient curves $\cR_3/G_1$, $\cR_3/G_2$, and $\cR_3/G_3$ has genus $2$; see \cite[Sec. 4.1.1, p. 150]{CO} for $\cR_3/G_1$, \cite[Sec. 4.2, pp. 163-164]{CO} for $\cR_3/G_2$, and \cite[Sec. 4.4, pp. 171]{CO} for $\cR_3/G_3$. On the other hand,
Theorem \ref{spettro} shows that no $\F_{27^2}$-maximal curve of genus $2$ is a Galois subcover of $\cH_{27}$.
\end{proof}

\section{Acknowledgements}
This research was supported by  the Italian Ministry MIUR, Strutture Geometriche, Combinatoria e loro Applicazioni, PRIN 2012 prot. 2012XZE22K, and by GNSAGA of the
Italian INdAM.

\end{document}